\numberwithin{equation}{section}
\newtheorem{thm}[equation]{Theorem}
\newtheorem*{thm*}{Theorem}
\newtheorem{lemma}[equation]{Lemma}
\newtheorem{ex}[equation]{Example}
\theoremstyle{remark}
\newtheorem*{remark*}{Remark}
\newtheorem{remark}[equation]{Remark}
\theoremstyle{definition}
\newcommand{\exref}[1]{Ex\-am\-ple \ref{#1}}
\newcommand{\thmref}[1]{Theo\-rem \ref{#1}}
\newcommand{\lemref}[1]{Lem\-ma \ref{#1}}
\newcommand{\remref}[1]{Re\-mark \ref{#1}}
\providecommand{\normaleq}{\unlhd}
\DeclareMathOperator{\Hol}{Hol}
\DeclareMathOperator{\End}{End}
\DeclareMathOperator{\Adj}{Adj }
\DeclareMathOperator{\Der}{Der }
\DeclareMathOperator{\GL}{GL}
\DeclareMathOperator{\gl}{\mathfrak{gl}}
\DeclareMathOperator{\Aut}{Aut}
\DeclareMathOperator{\Cent}{Cen}
\title{Longer nilpotent series}
\author{James B. Wilson}
\address{
	Department of Mathematics\\
	Colorado State University\\
	Fort Collins, CO 80523\\
}
\email{jwilson@math.colostate.edu}
\date{\today}
\keywords{nilpotent series, adjoints, filters}
\begin{document}

\maketitle

\begin{abstract}
New nilpotent series are produced that refine the usual nilpotent series of a
group.  These refinements can be arbitrarily longer than the
series they refine and therefore clarify in greater detail the structure of
automorphisms of nilpotent groups. Examples, properties, and an application 
to group-isomorphism testing are provided.
\end{abstract}

\section{Introduction}

In order to describe the automorphisms of a finite group
we often begin by locating a characteristic series from which 
the automorphisms can be determined by a recursive process.
This thinking is also used in group-isomorphism tests and classification,
as seen in early work by Fitting and Hall \citelist{\cite{Fitting:const}\cite{Hall:const}*{p. 208}},  and in successive 
improvements, e.g. \citelist{\cite{Higman:chic}*{Section III--IV}\cite{Newman}\cite{Robinson:aut}\cite{ELGO}*{Section 7}\cite{CH:iso}\cite{Babai:iso}}.

A barrier is that many groups have  few known characteristic subgroups.  
Obviously products of isomorphic simple groups have no proper nontrivial 
characteristic subgroups. Taunt and Glasby-P{\'a}lfy-Schneider characterized 
groups with a unique proper nontrivial characteristic 
subgroup \citelist{\cite{Taunt}\cite{GPS}}.  Yet those situations seem rare when
compared to the complexity of general finite groups.

Evidence suggests that  $p$-groups have many 
characteristic subgroups beyond those typically known.  Martin and Helleloid \citelist{\cite{Martin}\cite{HM:auto-generic}} 
show that for `most' finite $p$-groups $G$, $\Aut(G)$ is also a $p$-group.\footnote{`Most'
in those works is the conditional probability after fixing natural properties of $G$
\cite{HM:auto-generic}.}
So the action of $\Aut(G)$ on the factors of the exponent-$p$ central series of $G$ stabilizes
a maximal flag of each factor.  Remarkably, $G$ has a characteristic composition series (the preimages of the flags).

This article introduces characteristic refinements of nilpotent
series that can be located by solving systems of linear equations.
These refinements retain a correspondence with Lie rings graded by commutative monoids.
Such monoids capture  complicated subgroup containment.
Repeating the methods creates characteristic series substantially longer than traditional 
verbal and marginal subgroups chains.

\section{Notation}

Here $\mathbb{N}$ is the non-negative integers and $\mathbb{Z}^+=\mathbb{N}-\{0\}$.
For a set ${\tt X}$, $2^{\tt X}$ is its power set.  Our use of groups and rings follows \citelist{\cite{Khukhro}\cite{Jac:basicII}*{Chapter 4}}.
For $x,y\in G$, $[x,y]=x^{-1} x^y=x^{-1}y^{-1}xy$ and for $X,Y\subseteq G$, $[X,Y]=\langle [x,y] :x\in X, y\in Y\rangle$.
For subgroups $A_i$, $[A_1]=A_1$ and $[A_1,\dots, A_{m+1}]=[[A_1,\dots,A_m], A_{m+1}]$. 
A left $\mathbb{Z}$-module $V$ is a right $\End(V)$-module and a left $\End(V)^{\rm op}$-module
(where $\End(V)$ is the endomorphism ring of $V$ and $\End(V)^{\rm op}$ its opposite ring).
Put $\mathbb{Z}_p=\mathbb{Z}/p\mathbb{Z}$ and $\mathfrak{gl}(V)=\End(V)$ with
product $[X,Y]=XY-YX$.

\section{Filters}

Fix a group $G$ and a commutative monoid $M$.  A {\em filter} on $G$ is a function $\phi:M\to 2^G$  where
for every $s\in M$, $\phi_s$ is a subgroup of $G$, $G=\phi_0$, and
\begin{align}\label{eq:def-filter}
	(\forall & s,t\in M) & [\phi_s,\phi_t] & \leq \phi_{s+t}\leq \phi_s\cap \phi_t.
\end{align}
The assumption $G=\phi_0$ is not necessary but convenient since all observations about filters occur
between the groups $\cap_{s\in M} \phi_s$ and $\phi_0$.  Note that $\phi_s\normaleq \phi_0$.
Associated to a filter $\phi:M\to 2^G$ are the following normal subgroups: for $s\in M$, 
\begin{align*}
	 \phi_{s}^+ & = \prod_{t\in M-\{0\}} \phi_{s+t}.
\end{align*}
For all $s\in M$, $\phi_s^+\leq \phi_s$ and if $M=\langle {\tt X}\rangle$ then
$\phi_s^+=\langle \phi_{s+x}: x\in {\tt X}-\{0\}\rangle$.  
Notice 
\begin{align*}
	(\forall & s,t\in M) & [\phi_s^+,\phi_t]
		 & = \prod_{u\in M-\{0\}}[\phi_{s+u},\phi_{t}] 
		 \leq \prod_{u\in M-\{0\}} \phi_{s+u+t}
		  = \phi_{s+t}^+.
\end{align*}
Likewise, $[\phi_s,\phi_t^+]\leq \phi_{s+t}^+$.  
Now, if $s\in M-\{0\}$ then 
$[\phi_s,\phi_s]\leq \phi_{s+s}\leq \phi_s^+$ and so $L_s=\phi_s/\phi_{s}^+$ is abelian.
As $[\phi_0^+,\phi_s]\leq \phi_s^+$, $L_s$ 
is a right $\mathbb{Z}[\phi_0/\phi_0^+]$-module where $\phi_0/\phi_0^+$ acts by conjugation.
Associate to $\phi$ the abelian group:
\begin{align}\label{eq:Lie-filter}
 	L(\phi) & = \bigoplus_{s\in M} L_s, & L_0=0.
\end{align}
Also define an $M$-graded product on the homogeneous components by
\begin{align*}
	(\forall & x\in \phi_s,\forall y\in \phi_t) &
		[x\phi_s^+,y\phi_t^+]_{st} & = [x,y]\phi_{s+t}^+ = x^{-1}x^y\phi_{s+t}.
\end{align*}
\begin{thm}\label{thm:Lie}
$L(\phi)$ is an $M$-graded Lie ring and a $\mathbb{Z}[\phi_0/\phi_0^+]$-module.
\end{thm}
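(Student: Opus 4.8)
The plan is the following. The graded abelian group $L(\phi)=\bigoplus_{s\in M}L_s$ and the $\mathbb{Z}[\phi_0/\phi_0^+]$-action on each $L_s$ are already in place from the discussion preceding the statement, so the content is to verify that the bracket $[\,,\,]_{st}$ is well defined on cosets, that the operation it induces on $L(\phi)$ (extended bi-additively, with $L_0=0$) obeys the Lie-ring axioms, that it is homogeneous for the $M$-grading, and that it is compatible with the module structure. Since $L_0=0$, any bracket having a degree-$0$ argument, and any bracket whose target is $L_0$, is $0$ outright; so it suffices to work with $s,t,u\in M-\{0\}$. The containment used again and again is: for $s\in M-\{0\}$ and any $t\in M$,
\[
 [\phi_{s+t},\phi_s]\ \le\ \phi_{(s+t)+s}\ \le\ \phi_{s+t}^{+},
\]
the first inclusion by \eqref{eq:def-filter} and the second because $(s+t)+s$ is one of the indices $(s+t)+u$, $u\in M-\{0\}$, that define $\phi_{s+t}^{+}$; symmetrically with $t$ in place of $s$.

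First I would settle well-definedness, bi-additivity, and antisymmetry of the bracket on homogeneous components, all by the commutator identities $[xy,z]=[x,z]^y[y,z]$ and $[x,yz]=[x,z][x,y]^z$, the containments $[\phi_s^{+},\phi_t]\le\phi_{s+t}^{+}$ and $[\phi_s,\phi_t^{+}]\le\phi_{s+t}^{+}$ recorded earlier, normality $\phi_{s+t}^{+}\normaleq\phi_0$, and the displayed containment. For instance, replacing $x$ by $xa$ with $a\in\phi_s^{+}\le\phi_s$ alters $[x,y]$ only by the factors $[a,y]\in[\phi_s^{+},\phi_t]\le\phi_{s+t}^{+}$ and $[[x,y],a]\in[\phi_{s+t},\phi_s]\le\phi_{s+t}^{+}$, and symmetrically for $y$, so $[x\phi_s^{+},y\phi_t^{+}]_{st}$ does not depend on the chosen representatives. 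The identity $[xx',y]=[x,y]^{x'}[x',y]\equiv[x,y][x',y]\pmod{\phi_{s+t}^{+}}$ gives additivity in the first variable, and the analogous computation (using that $L_{s+t}$ is abelian) gives it in the second. Antisymmetry $[a,b]=-[b,a]$ is immediate from $[x,y][y,x]=1$ and commutativity of $M$, and $[a,a]=0$ for homogeneous $a$ from $[x,x]=1$; the two combine to give $[a,a]=0$ for every $a\in L(\phi)$.

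The crux is the Jacobi identity, which I would read off from the Hall--Witt identity
\[
 [[x,y^{-1}],z]^{y}\;[[y,z^{-1}],x]^{z}\;[[z,x^{-1}],y]^{x}=1
\]
evaluated on $x\in\phi_s$, $y\in\phi_t$, $z\in\phi_u$. Each of the three triple commutators lies in $\phi_{s+t+u}$ (for instance $[[x,y^{-1}],z]\in[[\phi_s,\phi_t],\phi_u]\le[\phi_{s+t},\phi_u]\le\phi_{s+t+u}$), and each outer conjugation, by $y$, by $z$, by $x$, changes its factor only by an element of $[\phi_{s+t+u},\phi_t]$, $[\phi_{s+t+u},\phi_u]$, $[\phi_{s+t+u},\phi_s]$ respectively, all of which lie in $\phi_{s+t+u}^{+}$ because $t,u,s\ne 0$. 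So, modulo $\phi_{s+t+u}^{+}$, the identity collapses to $[[x,y^{-1}],z]\,[[y,z^{-1}],x]\,[[z,x^{-1}],y]\equiv 1$. Projecting into $L_{s+t+u}$ and setting $a=x\phi_s^{+}$, $b=y\phi_t^{+}$, $c=z\phi_u^{+}$, bi-additivity together with $y^{-1}\phi_t^{+}=-y\phi_t^{+}$ (equivalently $[x^{-1},y]\equiv[x,y]^{-1}\pmod{\phi_{s+t}^{+}}$) turns the three summands into $-[[a,b],c]$, $-[[b,c],a]$, $-[[c,a],b]$, whose sum is $0$; this is exactly $[[a,b],c]+[[b,c],a]+[[c,a],b]=0$. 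Additivity in each slot then extends Jacobi to all of $L(\phi)$, and $[L_s,L_t]\subseteq L_{s+t}$ holds by construction, so $L(\phi)$ is an $M$-graded Lie ring.

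Finally, for the module statement I would assemble the componentwise actions into a diagonal action on $L(\phi)=\bigoplus_s L_s$: conjugation by $g\in\phi_0$ carries $\phi_s$ to $\phi_s$ and $\phi_s^{+}$ to $\phi_s^{+}$ (both being normal in $\phi_0$), hence acts on each $L_s$, and $\phi_0^{+}$ acts trivially because $[\phi_0^{+},\phi_s]\le\phi_s^{+}$; this makes $L(\phi)$ a $\mathbb{Z}[\phi_0/\phi_0^{+}]$-module, and since $[x^{g},y^{g}]=[x,y]^{g}$ the action is by grading-preserving Lie-ring automorphisms, which gives the compatibility of the two structures. I expect the Jacobi step to be the only genuine obstacle (matching the three conjugated triple commutators of Hall--Witt, modulo $\phi_{s+t+u}^{+}$, against the three Jacobi terms), everything else reducing to commutator bookkeeping resting on the displayed containment $\phi_{(s+t)+s}\le\phi_{s+t}^{+}$.
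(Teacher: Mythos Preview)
Your argument is correct and is exactly the standard verification; the paper itself offers no proof beyond the single line ``Compare \cite{Lazard}*{Chapter I},'' so you have written out the details the author chose to omit. The key containment you isolate, $[\phi_{s+t},\phi_s]\le\phi_{(s+t)+s}\le\phi_{s+t}^{+}$ for $s\neq 0$, is precisely what makes the conjugation corrections in the commutator and Hall--Witt identities vanish modulo $\phi_{s+t}^{+}$ and $\phi_{s+t+u}^{+}$, and your reduction of the Jacobi identity via Hall--Witt is the classical one.
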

\begin{proof} Compare \cite{Lazard}*{Chapter I}. 
\end{proof}
(\thmref{thm:Lie} holds letting $L_0$ be a Lie ring of  derivations on $\bigoplus_{s\in M-\{0\}} L_s$.)

Filters with $M\cong \mathbb{N}$ are essentially the filters described by Lazard \cite{Lazard}*{p. 106} but
with explicit operators.  For example, the  lower central series $\gamma$ of a group $N$ is
\begin{align}
	(\forall & i\in \mathbb{Z}^+) & \gamma_i & = \overbrace{[N,\dots,N]}^i.
\end{align}
To extend $\gamma$ to a filter on $\mathbb{N}$ we have several options, e.g. let $\gamma_0=N$.  A more
informative choice is to let $\gamma_0=\Hol(N)=\Aut(N)\ltimes N$ be the {\em holomorph} of $N$.  This captures the property
that for $i>0$, $\gamma_i$ is characteristic in $N$.  If $G$ is nilpotent of class $c$ and $|G|\neq 2$ then 
$\gamma$ factors through the injective filter on $\{0,1,\dots, c=c+1\}$.  

More generally, given a group $G$ and normal subgroup $N$,
the map $\gamma:\mathbb{N}\to 2^G$ with $\gamma_0=G$ and $\gamma_i=\gamma_i(N)$ for $i>0$ is a filter
where the subgroups $\gamma_1\geq \gamma_2\geq\cdots$ are a nilpotent series of $G$-invariant subgroups of $N$.
 This same treatment applies to Higman's exponent-$p$ central
series $\eta$, and the Jenning's series $\kappa$, i.e.: for $N\normaleq G$, set $G=\eta_0=\kappa_0$, $N=\eta_1=\kappa_1$ 
and recursively define for each $i>0$, 
\begin{align}
	 \eta_{i+1}(N) & = [N,\eta_i(N)]\eta_i^p(N)\qquad\& &
	 \kappa_i(N) = [N,\kappa_{i-1}(N)]\kappa_{\lfloor i/p\rfloor}(N)^p.
\end{align}
In these cases it makes sense to use $\mathbb{Z}_p\otimes L(\eta)$ and $\mathbb{Z}_p\otimes L(\kappa)$  to obtain
graded Lie $\mathbb{Z}_p$-algebras.  Indeed,  $\mathbb{Z}_p\otimes L(\kappa)$ is $p$-restricted.
See \citelist{\cite{Khukhro}*{Chapter 3}\cite{Shalev:p-groups}} for surveys of
filters over $\mathbb{N}$, their properties, and their uses.   

\subsection{Filters over ordered monoids}\label{sec:order}
In a commutative monoid $M$ there is a natural reflexive and transitive relation $\prec$ (a {\em pre-order}) defined as
$s\prec u$ if there is a $t$ where $s+t=u$.
Notice that filters $\phi:M\to 2^G$ are order-reversing maps from $\langle M,\prec\rangle$ to $\langle 2^G, \subseteq\rangle$.
Hence, filters translate some of the often complicated subgroup inclusions in a group into the language of commutative monoids.

Notice if $s\prec t$ and $t\prec s$ then $\phi_{s}=\phi_t$.  So we can improve our understanding when $M$ is an ordered monoid, that is, there is a partial order $\leq$ on $M$
such that whenever $s\leq t$ and $u\leq v$ then also $s+t\leq u+v$.
Say a filter $\phi:M\to 2^G$ is {\em ordered} if $M$ is ordered and $s\leq t$ implies $\phi_s \geq \phi_t$.
We will sometimes call filters {\em pre-ordered filters} for added clarity.
Of particular interest to us are ordered filters over totally ordered commutative monoids $M$.   In such a filter
for every $s,t\in M$, either  $\phi_s\geq \phi_t$ or $\phi_s\leq \phi_t$, i.e.
$\{\phi_s : s\in M\}$ is a series.  Indeed, for every $s\in M$, there is an $s^+\in M$ with $\phi_s^+=\phi_{s^+}$ ($s^+$ may
not be unique).    If $M$ is well-ordered, then we may take $s^+=s+e$, $e=\min M-\{0\}$.  We call
an ordered filter on a well-ordered set a {\em $\nu$ series}.  

\subsection{Generating filters}\label{sec:gen-filter}

It will be convenient to specify filters by describing a few members which ``generate'' the 
remaining terms.  At issue is what generation should mean.
The monoid is an obvious resource.  Given generators ${\tt X}$ of a commutative monoid $M$ 
it would seem that a function $\pi:{\tt X}\to 2^G$ would be enough information to specify a corresponding
filter $\bar{\pi}:M\to 2^G$.  The complication is that \eqref{eq:def-filter} asks for $\bar{\pi}$ to satisfy both a lower and
upper bound.  This is possible with some assumptions on $(M,{\tt X}, \pi)$ but we are not aware of a
general meaning of generating a filter from an arbitrary function $\pi:{\tt X}\to 2^G$.

Fix a monoid $M$ and a set ${\tt X}$ that generates $M$.  The Cayley graph $\mathcal{G}=\mathcal{G}(M;{\tt X})$ has
vertex set $M$ and directed labeled edge set $\left\{s\overset{x}{\longrightarrow} s+x : s\in M, x\in {\tt X}\right\}$.  A finite directed path in the
Cayley graph from $0$ to a vertex $s$ is specified by a sequence $s_1,\dots,s_d$ in ${\tt X}$ 
where $s=s_1+\cdots+s_d$.  We let $\mathcal{G}_0^s$ denote the set of all finite directed paths from $0$ to $s$.
Note that we regard an element $u$ of $\mathcal{G}_0^s$ to be the sequence of edge labels, i.e. $u=(s_1,\dots,s_d)$
where $s=s_1+\cdots +s_d$.  For notation we write $[\pi_u]=[\pi_{s_1},\dots,\pi_{s_d}]$.  

For a function $\pi:{\tt X}\to 2^G$, define $\bar{\pi}:M\to 2^G$ as follows: for each $s\in S$,
\begin{align}\label{eq:bar}
	\bar{\pi}_s & = \prod_{u\in \mathcal{G}_0^s} [\pi_{u}].
\end{align}
Notice by \eqref{eq:def-filter}, if ${\tt X}=M$ and $\pi$ is a filter then $\bar{\pi}=\pi$.
By applying the $3$-subgroups lemma we show $\bar{\pi}$ already satisfies the first
inequality in \eqref{eq:def-filter}.

\begin{lemma}\label{lem:lower}
If $\pi:{\tt X}\to 2^G$ maps into the normal subgroups of $G$ then 
for every $s,t\in M$, $[\bar{\pi}_s, \bar{\pi}_t] \leq \bar{\pi}_{s+t}$. 
\end{lemma}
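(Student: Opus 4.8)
The plan is to first replace the a priori complicated subgroups $\bar{\pi}_s,\bar{\pi}_t$ by single path commutators and then run an induction driven by the three subgroups lemma. Because $\pi$ takes values in normal subgroups of $G$, every left-normed commutator $[\pi_u]=[\pi_{s_1},\dots,\pi_{s_d}]$, being an iterated commutator of normal subgroups, is itself normal in $G$, and hence each $\bar{\pi}_s=\prod_{u\in\mathcal{G}_0^s}[\pi_u]$ is normal in $G$. Using the standard identity $[\prod_i A_i,\prod_j B_j]=\prod_{i,j}[A_i,B_j]$ for families of normal subgroups, it therefore suffices to show $[[\pi_u],[\pi_v]]\leq\bar{\pi}_{s+t}$ for all $s,t\in M$, all $u\in\mathcal{G}_0^s$, and all $v\in\mathcal{G}_0^t$. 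I will also use repeatedly the two elementary facts that $[A,B]=[B,A]$ as subgroups and that concatenating a path $v=(v_1,\dots,v_e)\in\mathcal{G}_0^t$ with a letter $x\in{\tt X}$ gives a path $(v_1,\dots,v_e,x)\in\mathcal{G}_0^{t+x}$ with $[[\pi_v],\pi_x]=[\pi_{(v_1,\dots,v_e,x)}]$.

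Now I would induct on the length $d$ of the \emph{first} path $u$, keeping $v$ (hence $t$) universally quantified. If $d=1$ then $u=(x)$ with $x\in{\tt X}$ and $s=x$, so $[[\pi_u],[\pi_v]]=[\pi_x,[\pi_v]]=[[\pi_v],\pi_x]=[\pi_{(v_1,\dots,v_e,x)}]\leq\bar{\pi}_{x+t}=\bar{\pi}_{s+t}$. If $d\geq 2$, write $u=(u',x)$ with $x\in{\tt X}$ and $u'\in\mathcal{G}_0^{s'}$ where $s=s'+x$, and put $A=[\pi_{u'}]$, $B=\pi_x$, $C=[\pi_v]$, all normal in $G$, so that $[A,B]=[\pi_u]$ and $[[A,B],C]=[[\pi_u],[\pi_v]]$. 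Since $\bar{\pi}_{s+t}$ is normal in $G$, the three subgroups lemma reduces the claim to bounding $[[B,C],A]$ and $[[C,A],B]$. For $[[C,A],B]$: by the inductive hypothesis applied to $u'$ (of length $d-1$) and $v$ we get $[[\pi_{u'}],[\pi_v]]\leq\bar{\pi}_{s'+t}$, and then $[\bar{\pi}_{s'+t},\pi_x]=\prod_{w\in\mathcal{G}_0^{s'+t}}[[\pi_w],\pi_x]=\prod_{w}[\pi_{(w,x)}]\leq\bar{\pi}_{s'+t+x}=\bar{\pi}_{s+t}$. For $[[B,C],A]$: here $[B,C]=[\pi_x,[\pi_v]]=[[\pi_v],\pi_x]=[\pi_{w_0}]$ is a single path commutator along $w_0=(v_1,\dots,v_e,x)\in\mathcal{G}_0^{t+x}$, so the inductive hypothesis applied to $u'$ (of length $d-1$) and $w_0$ gives $[[B,C],A]=[[\pi_{u'}],[\pi_{w_0}]]\leq\bar{\pi}_{s'+t+x}=\bar{\pi}_{s+t}$. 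Combining the two bounds through the three subgroups lemma yields $[[\pi_u],[\pi_v]]\leq\bar{\pi}_{s+t}$, completing the induction.

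The one point that needs care is the choice of induction parameter. Inducting on the total length $|u|+|v|$ does not close, because when the three subgroups lemma re-brackets $[[\pi_{u'},\pi_x],[\pi_v]]$ it moves the letter $x$ off the end of $u$ and onto the end of $v$, leaving $|u|+|v|$ unchanged; it is precisely by inducting on $|u|$ alone, with $v$ arbitrary, that both terms produced by the three subgroups lemma become instances of the hypothesis. Everything else — the normality of iterated commutators of normal subgroups, the expansion $[\prod A_i,\prod B_j]=\prod[A_i,B_j]$, and the bookkeeping that appending a letter to a path corresponds to one more outer commutator — is routine.
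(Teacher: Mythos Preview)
Your proof is correct. It and the paper's proof share the same core mechanism---reduce to single path commutators $[[\pi_u],[\pi_v]]$ and push letters around via the three subgroups lemma---but the bookkeeping is organized differently. The paper first isolates a self-contained rearrangement inequality
\[
\bigl[[\pi_{s_1},\dots,\pi_{s_i}],[\pi_{s_{i+1}},\dots,\pi_{s_{i+j}}]\bigr]\ \leq\ \prod_{\sigma\in S_{i+j}}[\pi_{s_{1\sigma}},\dots,\pi_{s_{(i+j)\sigma}}],
\]
proved by a double induction on $(i,j)$ in the lexicographic order with $j$ dominant, and only afterwards observes that every permuted left-normed commutator on the right lies in $\bar{\pi}_{s+t}$. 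You instead aim directly at the target $\bar{\pi}_{s+t}$ and induct on $|u|$ alone with $v$ arbitrary; the two branches the three subgroups lemma produces are handled respectively by (a) the inductive hypothesis on $u'$ versus the lengthened path $(v,x)$, and (b) the inductive hypothesis on $u'$ versus $v$ followed by the observation that $[\bar{\pi}_{s'+t},\pi_x]\leq\bar{\pi}_{s+t}$. Your organization is a bit more economical---a single induction variable and no intermediate statement---while the paper's version buys a standalone combinatorial lemma about commutators of normal subgroups that may be reusable elsewhere. Your closing remark explaining why induction on $|u|+|v|$ fails (the three subgroups lemma conserves total length) is exactly the point that forces the paper's choice of well-order as well.
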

\begin{proof}
We begin by proving that for all $i,j\in\mathbb{Z}^+$ and all $s_1,\dots,s_{i+j}\in M$,
\begin{align}\label{eq:collect}
\left[[\bar{\pi}_{s_1}, \dots,\bar{\pi}_{s_i}],
	[\bar{\pi}_{s_{i+1}},\dots,\bar{\pi}_{s_{i+j}}]\right]
& \leq \prod_{\sigma \in S_{i+j}}
	 [\bar{\pi}_{s_{1\sigma}},\dots, \bar{\pi}_{s_{(i+j)\sigma}}].
\end{align}
We induct on $(i,j)$ where $\mathbb{Z}^{+}\times\mathbb{Z}^+$ is well-ordered by $(i,j)\leq (i',j')$ if $j<j'$, or $j=j'$ and $i\leq i'$.
For every $i\geq 1$, if $j=1$ then:  
\begin{align*}
\left[[\bar{\pi}_{s_1},\dots,\bar{\pi}_{s_i}],
      [\bar{\pi}_{s_{i+1}}]\right]
& = [\bar{\pi}_{s_1},\dots,\bar{\pi}_{s_i},\bar{\pi}_{s_{i+1}}]
& \leq \prod_{\sigma \in S_{i+1}}
	 [\bar{\pi}_{s_{1\sigma}},\dots, \bar{\pi}_{s_{(i+1)\sigma}}].
\end{align*}
Now suppose $j>1$.  Let $X=[\bar{\pi}_{s_1}, \dots,\bar{\pi}_{s_i}]$, $Y=[\bar{\pi}_{s_{i+1}},\dots,\bar{\pi}_{s_{i+j-1}}]$, 
and $Z=\bar{\pi}_{s_{i+j}}$.  It follows that:
\begin{align*}
\left[[\bar{\pi}_{s_1}, \dots,\bar{\pi}_{s_i}],
	[\bar{\pi}_{s_{i+1}},\dots,\bar{\pi}_{s_{i+j}}]\right]
	 & =  [X,[Y,Z]]= [Y,Z,X]\leq [Z,X,Y][X,Y,Z].
\end{align*}
As $(i+1,j-1)<(i,j)$ we may induct to find 
\begin{align*}
	[Z,X,Y] & =
  [[\bar{\pi}_{s_{i+j}},\bar{\pi}_{s_1},\dots,\bar{\pi}_{s_i}],
    [\bar{\pi}_{s_{i+1}},\dots,\bar{\pi}_{s_{i+j-1}}]]
 \leq \prod_{\sigma \in S_{i+j}}
	 [\bar{\pi}_{s_{1\sigma}},\dots, \bar{\pi}_{s_{(i+j)\sigma}}]. 
\end{align*}
Since $(i,j-1)<(i,j)$ we appeal once more to induction to show
\begin{align*}
	[X,Y,Z] & =
    \left[\left[[\bar{\pi}_{s_1},\dots,\bar{\pi}_{s_i}],
      [\bar{\pi}_{s_{i+1}},\dots,\bar{\pi}_{s_{i+j-1}}]\right],\bar{\pi}_{s_{i+j}}\right]\\
&  \leq \left[\prod_{\sigma \in S_{i+j-1}}
	[\bar{\pi}_{s_{1\sigma}},\dots, \bar{\pi}_{s_{(i+j-1)\sigma}}], \bar{\pi}_{i+j}\right]
 \leq \prod_{\sigma \in S_{i+j}}
	 [\bar{\pi}_{s_{1\sigma}},\dots, \bar{\pi}_{s_{(i+j)\sigma}}].
\end{align*}
Combining these three inclusions we observe that the formula holds for
$(i,j)$ and so by induction \eqref{eq:collect} holds.

Fix $s,t\in M$.  For path $(s_1,\dots,s_i)$ from $0$ to $s$ and $(s_{i+1},\dots,s_{i+j})$ from
$0$ to $t$, it follows that $s+t=s_1+\cdots+s_{i+j}$ and so
$(s_1,\dots,s_{i+j})$ is a path from $0$ to $s+t$.  So now it follows from \eqref{eq:collect} that:
\begin{align*}
	[\bar{\pi}_s,\bar{\pi}_t] & = \prod_{
		u\in \mathcal{G}_0^s
		v\in \mathcal{G}_0^t}
		[ [\pi_u],[\pi_v]]
	 \leq \prod_{w\in \mathcal{G}_0^{s+t}} [\pi_{w}]=\bar{\pi}_{s+t}.
\end{align*}
\end{proof}

The formula \eqref{eq:bar} is not sufficient to generate a filter as we must also guarantee that for every
$s\in M$, and $(s_1,\dots,s_d)\in \mathcal{G}_0^s$, $\bar{\pi}_s\leq \bar{\pi}_{s_1}\cap\cdots \cap \bar{\pi}_{s_d}$.  
For that we have needed further assumptions on $\pi$ and on $M$.

We will rely on commutative (pre-)ordered monoids $\langle M,+,\prec\rangle$ with minimal element $0$ and
satisfying:
\begin{equation}\label{def:star}
\textnormal{If $s\prec u$ and  $u=\sum_{i=1}^d u_i$,
then there exists $s_i\prec u_i$ with $s=\sum_{i=1}^d s_i$.}
\end{equation}
To show that a commutative monoid satisfies \eqref{def:star} it suffices to prove it for $d=2$.  

\begin{ex}
\begin{enumerate}[(i)]
\item Every cyclic monoid satisfies \eqref{def:star}.
\item For a family of commutative monoids satisfying \eqref{def:star}, the direct product also
satisfies \eqref{def:star}.  In particular direct products of cyclic monoids satisfy \eqref{def:star}.
\item $\mathbb{N}^d$ with the lexicographic well-ordering satisfies \eqref{def:star}.
\end{enumerate}
\end{ex}
\begin{proof}
For (i), note first that every cyclic monoid is isomorphic $C_{k,m}=\{c^i: 0\leq i<k+m\}$ for $k\in \mathbb{N}\cup\{\infty\}$ 
and $m\in \mathbb{Z}^+$, and where the product is 
$c^ic^j=c^{i+j}$ if $i+j<k$; else, $c^i c^j=c^{k+r}$ where $i+j-k=qm+r$, $0\leq r<m$. 
(Note $C_{\infty,m}\cong \mathbb{N}$.)  
Suppose that $s=c^i\prec u=c^j$ and $u=u_1 u_2$ with
$u_i=c^{e_i}$, for $0\leq i,j,e_1,e_2<k+m$.  If $s\prec u_1$ (i.e.: $i\leq e_1$), let $s_1=s$ and $s_2=1$. 
Otherwise, $u_1\prec s$ so $i\geq e_1$ and we let $s_1=u_1$, $s_2=c^{i-e_1}$ thus
$s_1 s_2=c^{i}=s$, $s_1\prec u_1$, $s_2\prec u_2$.

Now we prove (ii).  Let $\mathcal{F}$ be a family of commutative monoids satisfying \eqref{def:star}.
Let $s=(s^F: F\in \mathcal{F}),u=(u^F: F\in\mathcal{F})\in \prod\mathcal{F}$ with $s\prec u$.  It follows 
that for every $F\in\mathcal{F}$, $s^F\prec u^F$.
So if $u=u_1+u_2$ then $u^F=u_1^F+u_2^F$ so there are $s_1^F, s_2^F\in F$ such that
$s_i^F\prec u_i^F$ and $s=(s_1^F+s_2^F: F\in\mathcal{F})$.  

Finally we prove (iii).
Write $s=\sum_{i=1}^d s_i e_i$, $t=\sum_{i=1}^d t_i e_i$ and $u_j=\sum_{i=1}^d u_{ij} e_i$ with $s_i,t_i,u_{ij}\in \mathbb{N}$.  
The $i$-th row of the matrix
$U=[u_{ij}]$ sums to $t_i$. Assume $s\neq t$ so $s<t$ and there is a $c$ where $s_1=t_1,\dots, s_c=t_c$ and 
$s_{c+1}<t_{c+1}$.  For $1\leq i\leq c$, set $v_{ij}=u_{ij}$.
Next, since $s_{c+1}<t_{c+1}$, there are $v_{(c+1)j}\leq u_{(c+1)j}$ such that
$s_{c+1}=\sum_j v_{(c+1)j}$ and at least one $j_0$ exists such that $v_{(c+1)j_0}<u_{(c+1)j_0}$.  Finally, for each $c+1<i\leq d$, if
$s_i\leq t_i$ then choose $s_i=\sum_{j} v_{ij}$ with $v_{ij}\leq u_{ij}$ for each $i$;
otherwise, $s_i>t_i$.  So, set $v_{ij}=u_{ij}$ for all $j\neq j_0$ and $v_{ij_0}=u_{ij_0}+(s_i-t_i)$.

We claim the matrix $V=[v_{ij}]$ has the following properties: (i) for each $i$,
 $s_i=\sum_j v_{ij}$,
and (ii) for each $j$, $v_j=\sum_i v_{ij} e_i\leq u_j$ in the lexicographic order.
For (i), the first $c+1$ rows are elected in this manner as are any subsequent rows
where $s_i\leq t_i$.  If in a row $i$, $s_i>t_i$ then 
$\sum_{j} v_{ij}=u_{ij_0}+(s_i-t_i)+\sum_{j\neq j_0} u_{ij}=s_i-t_i+t_i=s_i$.
For (ii), the first $c$ coefficients of $v_j$ and $u_j$ agree so the first place
$v_j$ can differ from $u_j$ is if $v_{(c+1)j}<u_{(c+1)j}$.  If this occurs then
$v_j\leq u_j$.  Otherwise, for all $i$, $v_{ij}\leq u_{ij}$ and so $v_j\leq u_j$. 
\end{proof}

\begin{thm}\label{thm:generate}
Fix a commutative (pre-)ordered monoid $\langle M, +, \prec \rangle$ with minimal element $0$ and 
satisfying \eqref{def:star}. Fix a set ${\tt X}$ of generators for $M$ such
that for every $x\in {\tt X}$ and $y\in M$ if $y\prec x$ then $y\in {\tt X}$.
If $\pi:{\tt X}\to 2^G$ maps into the normal subgroups of $G$ and for every $s,u\in {\tt X}$, 
if $s\prec u$ then $\pi_{u}\leq \pi_s$, then $\bar{\pi}$ is a (pre-)ordered filter.
\end{thm}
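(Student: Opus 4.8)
The plan is to bootstrap from \lemref{lem:lower}, which — using only that $\pi$ maps into normal subgroups of $G$ — already delivers the left inequality of \eqref{eq:def-filter}: $[\bar\pi_s,\bar\pi_t]\leq\bar\pi_{s+t}$ for all $s,t\in M$. What remains is to check that each $\bar\pi_s$ is a normal subgroup of $G$, that $\bar\pi_0=G$, that $\bar\pi_{s+t}\leq\bar\pi_s\cap\bar\pi_t$, and that $s\prec t$ implies $\bar\pi_t\leq\bar\pi_s$. The last two are really a single statement: since $0$ is the least element and $\prec$ is compatible with $+$, we have $s=s+0\prec s+t$ and $t\prec s+t$, so once we know $a\prec b\Rightarrow\bar\pi_b\leq\bar\pi_a$ the second inequality of \eqref{eq:def-filter} and the order-reversal both follow at once. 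Thus the whole theorem reduces to
\begin{align*}
	a\prec b\ \text{in}\ M\ \Longrightarrow\ \bar\pi_b\leq\bar\pi_a .
\end{align*}

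The group-theoretic bookkeeping is routine, so I would dispatch it first. Every factor $[\pi_u]=[\pi_{s_1},\dots,\pi_{s_d}]$ in \eqref{eq:bar} is an iterated commutator of normal subgroups of $G$, hence is itself normal in $G$; and a product of normal subgroups of $G$ is a normal subgroup, so $\bar\pi_s\normaleq G$ for every $s$. For $\bar\pi_0=G$ I would read the trivial (length-zero) path as the least element of $\mathcal{G}_0^0$, contributing $G$ to the product in \eqref{eq:bar} (equivalently, adjoin $0$ to ${\tt X}$ with $\pi_0=G$); in any case, as in the remark after \eqref{eq:def-filter}, one may work between $\bigcap_{s\in M}\bar\pi_s$ and $\bar\pi_0$.

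The substance is the displayed implication, and here is the argument I would give. Fix $a\prec b$. Since $\bar\pi_b=\prod_{w\in\mathcal{G}_0^b}[\pi_w]$, it suffices to show $[\pi_w]\leq\bar\pi_a$ for each directed path $w=(w_1,\dots,w_d)\in\mathcal{G}_0^b$, i.e.\ $w_i\in{\tt X}$ and $b=w_1+\cdots+w_d$. Applying \eqref{def:star} — which holds for all $d\geq1$ once it holds for $d=2$ — to $a\prec b=\sum_i w_i$ produces $a_1,\dots,a_d\in M$ with $a_i\prec w_i$ and $a=a_1+\cdots+a_d$. Because each $w_i\in{\tt X}$ and ${\tt X}$ is downward closed under $\prec$, each $a_i$ again lies in ${\tt X}$; hence $(a_1,\dots,a_d)$ is itself a directed path from $0$ to $a$, so $[\pi_{a_1},\dots,\pi_{a_d}]$ is one of the factors of $\bar\pi_a$ and in particular $[\pi_{a_1},\dots,\pi_{a_d}]\leq\bar\pi_a$. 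Finally, $a_i\prec w_i$ with $a_i,w_i\in{\tt X}$ gives $\pi_{w_i}\leq\pi_{a_i}$ by the hypothesis on $\pi$, and iterated commutator subgroups are monotone in each argument (if $A_i\leq B_i$ for all $i$ then $[A_1,\dots,A_d]\leq[B_1,\dots,B_d]$), so
\begin{align*}
	[\pi_w]=[\pi_{w_1},\dots,\pi_{w_d}]\leq[\pi_{a_1},\dots,\pi_{a_d}]\leq\bar\pi_a .
\end{align*}
Taking the product over $w\in\mathcal{G}_0^b$ yields $\bar\pi_b\leq\bar\pi_a$, and together with the first paragraph this proves the theorem.

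I expect the one genuine obstacle to be the step asserting $a_i\in{\tt X}$: one must know that the summands supplied by \eqref{def:star} are legitimate edge labels of the Cayley graph, for otherwise $(a_1,\dots,a_d)$ is not a path and $[\pi_{a_1},\dots,\pi_{a_d}]$ need not lie inside $\bar\pi_a$. This is precisely what forces the two standing hypotheses to be paired — \eqref{def:star} on $M$ to manufacture the $a_i$, and downward-closedness of ${\tt X}$ under $\prec$ to keep them inside ${\tt X}$. The remaining items (reducing \eqref{def:star} to $d=2$, monotonicity of iterated commutators, and the degenerate cases $d\leq1$ and $b=0$) are straightforward and I would leave them as such.
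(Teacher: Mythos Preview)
Your proof is correct and follows essentially the same route as the paper: invoke \lemref{lem:lower} for the inclusion $[\bar\pi_s,\bar\pi_t]\leq\bar\pi_{s+t}$, then prove order-reversal by decomposing each path $w\in\mathcal{G}_0^b$ via \eqref{def:star}, using downward-closure of ${\tt X}$ to keep the resulting summands in ${\tt X}$, and comparing the iterated commutators termwise; the right-hand inequality of \eqref{eq:def-filter} then follows from $s\prec s+t$ and $t\prec s+t$. You add a little extra bookkeeping (normality of $\bar\pi_s$, the value of $\bar\pi_0$) that the paper leaves implicit, but the substantive argument is the same.
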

\begin{proof}
Fix $s,u\in M$ with $s\prec u$ for the (pre-)-order on $M$.
By \eqref{def:star} for every $w=(u_1,\dots,u_d)\in \mathcal{G}_0^u$,  as $s\prec u$, 
there exists a decomposition $s=\sum_{i=1}^d s_i$ with $s_i\prec u_i$.  
By our assumption on ${\tt X}$, $x=(s_1,\dots,s_d)\in \mathcal{G}_0^s$.  By our assumptions
on $\pi$, $\pi_{u_i}\leq \pi_{s_i}$ and $[\pi_{w}]\leq [\pi_{x}]$.  Thus:
\begin{align*}
	\bar{\pi}_{u} & = \prod_{w\in \mathcal{G}_0^{s+t}} [\pi_{w}]
		\leq  \prod_{x\in\mathcal{G}_0^s} 
			[\pi_{x}] = \bar{\pi}_s.
\end{align*}
So $\pi$ is order-reversing.  Now for all $s,t\in M$, $s\prec s$ and $0\prec t$ so $s\prec s+t$.  Thus,
$\pi_{s+t}\leq \pi_s$.  Likewise, $\pi_{s+t}\leq \pi_t$.
Together with \lemref{lem:lower}
we see $\bar{\pi}$ is a (pre-)ordered filter.
\end{proof}

\section{Adjoint, centroid, and derivation filter refinements}
Here we introduce new filters by considering refinements of known filters $\phi:M\to 2^G$.
 Fix $s,t\in M$ and assume $M$ has property \eqref{def:star}.
The graded product of the Lie algebra $L=L(\phi)$ (see \eqref{eq:Lie-filter}) restricts to
a bimap (biadditive map) $[,]=[,]_{st}:L_{s}\times L_{t}\to L_{s+t}$.
We introduce three nonassociative rings  $\Adj([,])$, $\Der([,])$, and $\Cent([,])$ that
are new sources of $G$-invariant subgroups and capture various properties of
commutation in $G$.  The rings are:   
\begin{align*}
  \Adj([,]) & = \{ (X,Y)\in \End(L_s)\times \End(L_t)^{{\rm op}}:\\
  			 & \qquad \forall u\in L_s, \forall v\in L_t,\;[uX,v]= [u,Yv]\},\\
	\Cent([,] ) & = \{ (X,Y;Z) \in \End(L_s)\times \End(L_t)\times \End(L_{s+t}) :\\
		& \qquad \forall u\in L_s,\forall v\in L_t,\; [uX,v]+[u,vY] = [u,v]Z\},\quad \&\\
	\Der([,] ) & = \{ (X,Y;Z) \in \gl(L_s)\times \gl(L_t)\times \gl(L_{s+t}) :\\
		& \qquad \forall u\in L_s,\forall v\in L_t\; [uX,v]+[u,vY] = [u,v]Z\}.
\end{align*}
The {\em adjoint} ring $\Adj([,])$ is unital and associative. The {\em centroid} ring
 $\Cent([,])$ is unital, associative, and essentially commutative.\footnote{
Technically, $[,]_{st}$ factors through $L_s/L_t^{\bot}\times L_t/L_s^{\bot}\to [L_s,L_t]$, 
where $L_t^{\perp}=\{u\in L_s: [u,L_t]=0\}$ etc..  The centroid on that induced bimap is commutative 
\cite{Wilson:direct-I}*{Lemma 6.8(iii)}.} The {\em derivation} ring
$\Der([,])$ is a Lie ring.  The motivation to consider these rings is discussed in Section~\ref{sec:algebras}.

Now let $J$ be the Jacobson radical of $\Adj([,])$. Set $J^0=\Adj([,])$ and 
$J^{i+1}=J^i J$ for $i\geq 0$.  For each $i\in \mathbb{N}$, define 
$H_i$ so that $\phi_{s}^+\leq H_i\leq \phi_s$ and 
\begin{align}\label{def:alpha}
  H_i/\phi_{s}^+ & = L_s J^i.
\end{align}
Next, for $(u,i)\in {\tt X}=M\times \{0\}\cup \{u: u\prec s\}\times \mathbb{N}$, define
\begin{align*}
	 \alpha_u^i & = \left\{\begin{array}{cc}
		\phi_u & i=0 \textnormal{ or } u\neq s,\\
		H_i & u=s.
	\end{array}\right.
\end{align*}
\begin{lemma}\label{lem:hypo}
$(M\times\mathbb{N}, {\tt X}, \alpha)$ satisfies the hypotheses of \thmref{thm:generate}.
\end{lemma}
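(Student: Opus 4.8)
The plan is to verify, one at a time, the hypotheses of \thmref{thm:generate} for the triple $(M\times\mathbb{N},{\tt X},\alpha)$, where $M\times\mathbb{N}$ carries the product (pre-)order in which $(v,i)\prec(w,j)$ exactly when $v\prec w$ in $M$ and $i\leq j$. First, $M\times\mathbb{N}$ is a commutative (pre-)ordered monoid with minimal element $(0,0)$, and it satisfies \eqref{def:star}: $M$ does by hypothesis, $\mathbb{N}$ does since it is cyclic, and a direct product of monoids each satisfying \eqref{def:star} satisfies it as well (Example above). So the work is to check the three conditions that involve ${\tt X}$ and $\alpha$: (a) ${\tt X}$ generates $M\times\mathbb{N}$ and is closed downward under $\prec$; (b) $\alpha$ maps into the normal subgroups of $G$; and (c) $\alpha$ is order-reversing on ${\tt X}$.

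Condition (a) is routine: every $(v,j)$ is the sum $(v,0)+j\cdot(0,1)$ of elements of ${\tt X}$, using $(v,0)\in M\times\{0\}$ and $(0,1)\in\{u:u\prec s\}\times\mathbb{N}$ (since $0\prec s$). For downward closure, if $(v,i)\prec(w,0)$ then $i=0$, so $(v,i)\in M\times\{0\}$; and if $(v,i)\prec(w,j)$ with $w\prec s$, then $v\prec w\prec s$, so $(v,i)\in\{u:u\prec s\}\times\mathbb{N}$. In particular $(s,i)\in{\tt X}$ for every $i$ (as $s\prec s$), so $\alpha_{(s,i)}=H_i$ is well defined.

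Condition (b) is the crux. When the second coordinate is $0$ or the first is not $s$, $\alpha$ returns some $\phi_u$, and $\phi_u\normaleq\phi_0=G$; so the only issue is $H_i\normaleq G$ for $i\geq1$. As $\phi_s^+\normaleq G$ and $\phi_s^+\leq H_i\leq\phi_s$, it suffices to show $H_i/\phi_s^+=L_sJ^i$ is invariant under the conjugation action of $G$ on $L_s=\phi_s/\phi_s^+$. Conjugation by $g\in G=\phi_0$ is a graded automorphism of the Lie ring $L(\phi)$: its components $\bar{g}_s,\bar{g}_t,\bar{g}_{s+t}$ are compatible with $[,]=[,]_{st}$ because $[x^g,y^g]=[x,y]^g$, so they form an isomorphism of the bimap $[,]$. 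Functoriality of the adjoint construction promotes this to a ring automorphism $\Phi_g$ of $\Adj([,])$, so $\Phi_g$ fixes the Jacobson radical $J$ and hence each $J^i$. Since $(uX)\bar{g}_s=(u\bar{g}_s)\,\Phi_g(X)$ for all $u\in L_s$ and $X\in\Adj([,])$, we obtain $(L_sJ^i)\bar{g}_s=(L_s\bar{g}_s)\,\Phi_g(J^i)=L_sJ^i$. Hence $L_sJ^i$ is $G$-invariant and $H_i\normaleq G$. I expect this to be the main obstacle, since it rests on the functorial behaviour of $\Adj$ under bimap isomorphisms together with the automorphism-invariance of the Jacobson radical.

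For condition (c), take $x=(v,i)\prec u=(w,j)$ in ${\tt X}$, so $v\prec w$ and $i\leq j$; we must show $\alpha_u\leq\alpha_x$. If $v\neq s$ then $\alpha_x=\phi_v$, while $\alpha_u$ is either $\phi_w\leq\phi_v$ (filters reverse order) or, when $w=s$, $\alpha_u=H_j\leq\phi_s\leq\phi_v$. If $v=s$ then $\alpha_x=H_i$: when $w=s$ too, $i\leq j$ gives $J^j=J^iJ^{j-i}\subseteq J^i$ (as $J^i$ is an ideal of $\Adj([,])$), hence $H_j/\phi_s^+=L_sJ^j\subseteq L_sJ^i=H_i/\phi_s^+$, i.e.\ $\alpha_u=H_j\leq H_i=\alpha_x$; and when $w\neq s$, then $s=v\prec w$ with $w\neq s$ forces $w=s+t$ for some $t\in M-\{0\}$, so $\alpha_u=\phi_w=\phi_{s+t}\leq\phi_s^+\leq H_i=\alpha_x$. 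In every case $\alpha_u\leq\alpha_x$, so $\alpha$ is order-reversing on ${\tt X}$, and all hypotheses of \thmref{thm:generate} are met, as claimed.
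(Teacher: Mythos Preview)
Your proof is correct and follows the same verification-of-hypotheses approach as the paper. You are in fact more thorough: the paper simply asserts that $\alpha$ maps into the normal subgroups of $G$, whereas you supply the argument that conjugation induces a ring automorphism of $\Adj([,])$ fixing $J$ and hence each $L_sJ^i$; and in the order-reversing check you cover the case $v\neq s$, $w=s$ (in your notation), which the paper's case split appears to omit.
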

\begin{proof}
As $M$ and $\mathbb{N}$ satisfy \eqref{def:star}, so does $M\times \mathbb{N}$.
Evidently ${\tt X}$ generates $M \times \mathbb{N}$ as a monoid.  For $(x,i)\in {\tt X}$ 
and $(y,j)\in M$, if $(y,j)\prec (x,i)$ then $y\prec x$ and $j\leq i$.  In particular, either $i=j=0$
so that $(y,j)\in {\tt X}$ or $i>0$, $y\prec x\prec s$, and so $(y,j)\in {\tt X}$.  
Also $\alpha:{\tt X}\to 2^G$ maps into the normal subgroups of $G$.
Finally, if $(y,j)\prec (x,i)$ in ${\tt X}$ then $y\prec x$ and $j\leq i$.  If $x=y=s$ then
$\alpha_x^i=H_i\leq H_j=\alpha_y^j$.  Suppose $x\neq s$.  If $y\neq s$ 
then $\alpha_y^j=\phi_y\geq \phi_x=\alpha_x^i$.   If $y=s$ then $\alpha_y^j=H_j\geq \phi_s^+\geq \phi_x=\alpha_x^i$.
\end{proof}

In light of \thmref{thm:generate} and \lemref{lem:hypo},
we may now speak of the filter $\bar{\alpha}:M\times \mathbb{N}\to 2^G$ generated by $\alpha$.
We call this an  {\em adjoint} refinement.

Our construction of adjoint refinements of filters depends  
on the choice of filter and the selected homogeneous components of the Lie ring. 
A natural choice is to begin with a $\nu$ series $\nu:\mathbb{N}^d\to 2^G$.
Since $L_0$ represents a fixed set of operators it is not informative to refine $L_0$
so instead we refine the first $L_s\neq 0$ where $L_s\neq L_0$.
Of course $\nu$ series are well-ordered and so a small modification is necessary.
Assume $M=\mathbb{N}^d$ with the right-to-left lexicographic order (i.e. $e_{d}=(\dots,0,1)>e_{d-1}=(\dots,1,0)>\cdots$).
Define $\alpha$ as above only now restricted to the generating set ${\tt X}=\{u\in \mathbb{N}^{d+1} : u< (s+e_{d},0)\}$. 
(Note in $M$, $s^+=s+e_{d}$ which is why ${\tt X}$ is the appropriate choice.)
The resulting well-ordered filter $\bar{\alpha}$ is again a $\nu$-series, but now on $\mathbb{N}^{d+1}$.
We call this a {\em lex-least} adjoint refinement of $\nu$.

In general we can begin with the lower central series $\gamma$ on $\mathbb{N}$, or other standard $\mathbb{N}$-filters.
By recursive lex-least adjoint refinements we construct  longer and longer characteristic series indefinitely or until
the series stabilizes.  In that case we speak of the {\em stable lex-least} refinement.

Note that the analogous 
constructions based on the radicals of $\Cent([,])$ and $\Der([,])$ 
(equivalently the radical of the associative enveloping algebra of $\Der([,])$) 
determine functions $\sigma:{\tt X}\to 2^G$ and $\delta:{\tt X}\to 2^G$ and corresponding filters and $\nu$ series.
 
%
%
\subsection{Concrete examples}\label{sec:concrete}
We introduce some families of examples of lex-least refinements.  Usually to spot proper adjoint, centroid, 
or derivation refinements requires we setup and solve specific systems of 
linear equations and discern the structure of the appropriate rings.  There
are algorithms for that task \citelist{\cite{Wilson:find-cent}*{Sections 4  \& 5}\cite{BW:isom}\cite{deGraaf}\cite{GMT}}.
That is not always necessary and as the examples below show.

\begin{ex}
The $\gamma$ series of a finitely generated nonabelian free group has no proper lex-least
adjoint, derivation, or centroid refinement.
\end{ex}
\begin{proof}
In a free group of rank $r$, $\gamma_1/\gamma_2\cong \mathbb{Z}^r$, $\gamma_2/\gamma_3\cong \mathbb{Z}^{\binom{r}{2}}$ and $[,]:\gamma_1/\gamma_2\times \gamma_1/\gamma_2\to \gamma_2/\gamma_3$ is the exterior square $\mathbb{Z}^r\times \mathbb{Z}^r\to \mathbb{Z}^r\wedge \mathbb{Z}^r$.  The adjoint ring of $[,]$ is isomorphic to $\mathbb{Z}$ if $r\neq 2$ and $M_2(\mathbb{Z})$ if $r=2$; cf. \cite{Wilson:unique-cent}*{Sections 7.1 \& 7.6}.  In these rings the Jacobson radical is trivial.  Likewise, the centroid (which embeds in the adjoint ring) is  $\mathbb{Z}$.  Finally, the derivation algebra of the exterior square is $\mathfrak{gl}_r(\mathbb{Z})$ so its enveloping algebra
has a trivial radical.
\end{proof}

\begin{ex}\label{ex:longerseries}
Every finite $p$-group with $\eta_1/\eta_2$ of odd dimension and $\eta_2/\eta_3$ of dimension $2$ has a proper lex-least
adjoint refinement of the $\eta$ series.
\end{ex}

The \exref{ex:longerseries} explains the proper refinement later in \eqref{eq:4by4} and it 
generalizes in several ways.
We assert the following fact in the proof  of \exref{ex:longerseries}.

\begin{lemma}\label{lem:adj-ex}
Let $Z\in M_m(\mathbb{Z}_p)$ be the matrix with $Z_{i(m-i+1)}=1$ and $0$'s elsewhere.  Define a bimap $\circ:(\mathbb{Z}_p^m\oplus \mathbb{Z}_p^{m+1})\times (\mathbb{Z}_p^m\oplus \mathbb{Z}_p^{m+1})\to \mathbb{Z}_p^2$ by
\begin{align*}
	(u,v)\circ (x,y) & =(uFy^t-vF^t x, uGy^t-vG^t x^t)
\end{align*}
where $F=[Z,0], G=[0,Z]\in M_{m\times (m+1)}(\mathbb{Z}_p)$.  It follows that 
\begin{align*}
	\Adj(\circ) & = \left\{\left(\begin{bmatrix} aI_m & T \\ 0 & bI_{m+1}\end{bmatrix}, 
	\begin{bmatrix} b I_m & -T\\ 0 & a I_{m+1} \end{bmatrix}\right) : T \textnormal{ a Toeplitz matrix}\right\}.
\end{align*}
In particular, $J(\Adj(M))\cong \mathbb{Z}_p^{2m}>0$.
\end{lemma}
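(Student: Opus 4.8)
The plan is to compute $\Adj(\circ)$ directly from its defining equations and then read off the Jacobson radical. Write an element of $\Adj(\circ)$ as a pair $(X,Y)$ with $X\in\End(\mathbb Z_p^m\oplus\mathbb Z_p^{m+1})$ and $Y$ in the opposite endomorphism ring of the same space, and decompose each into a $2\times 2$ block matrix against the splitting $\mathbb Z_p^m\oplus\mathbb Z_p^{m+1}$, say $X=\begin{bmatrix}A&B\\C&D\end{bmatrix}$ and $Y=\begin{bmatrix}A'&B'\\C'&D'\end{bmatrix}$. The adjointness condition $(u,v)X\circ(x,y)=(u,v)\circ(x,y)Y$ for all $u,v,x,y$ is biadditive in $((u,v),(x,y))$, so it is equivalent to finitely many matrix identities obtained by comparing the coefficients of each of the four ``monomials'' $u\,x^t$, $u\,y^t$, $v\,x^t$, $v\,y^t$ in each of the two output coordinates. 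Here one uses the explicit shapes $F=[Z\ 0]$, $G=[0\ Z]$ (so $F$ kills the last coordinate of $\mathbb Z_p^{m+1}$ on the right and $G$ kills the first), together with $F G^t=0=GF^t$ and the fact that $ZZ^t$ and $Z^tZ$ are the two complementary ``anti-diagonal'' idempotent-like matrices.

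First I would carry out this coefficient comparison. Feeding $(u,v)X=(uA+vC,\;uB+vD)$ into the left side and $(x,y)Y=(xA'+yC',\;xB'+yD')$ into the right side produces, for each output slot, an identity of the form $\sum(\text{stuff})=\sum(\text{stuff})$ that must hold identically in $u,v,x,y$; matching the four bilinear monomials gives eight matrix equations. The equations coming from the ``wrong'' monomials (those mixing an $m$-vector with an $(m+1)$-vector through $F$ or $G$ on the correct side but the off-diagonal blocks on the other) force the off-diagonal blocks $C,C'$ to vanish and relate $B$ to $-B'$ via $B Z^t = Z B'^{t}$-type identities; the ``diagonal'' monomials force $A F = F D'{}^t$-transpose relations, i.e. $ZA=D'Z$ and $ZD=A'Z$ for the $G$ and $F$ parts, which over a field and with $Z$ of full rank forces $A=aI_m$, $D'=aI_m$ wait — I mean $A=A'=aI_m$ and $D=D'$, and similarly the other pair forces $D=D'=bI_{m+1}$ and $A'=bI_m$; finally the surviving relation on $B$ becomes $ZB=BZ'$ for an appropriate shift, whose solution space is exactly the Toeplitz matrices, and $B'=-B^{\,t}$-up-to-the-same-reflection, matching the claimed form. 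This is the step I expect to be the main obstacle: keeping the index bookkeeping straight between the $m$ and $m+1$ sized blocks, tracking how the reflection $Z$ interacts with transposes in the two output coordinates, and verifying that the space of $B$ with $ZB=$ (shifted $B$) is precisely the $(2m)$-dimensional space of Toeplitz matrices rather than something larger or smaller.

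Once the description of $\Adj(\circ)$ is established, the ring structure is transparent: the map $(X,Y)\mapsto(a,b)$ is a surjective ring homomorphism onto $\mathbb Z_p\times\mathbb Z_p$ (a semisimple ring), and its kernel $N$ consists of the pairs with $a=b=0$, i.e. those determined by a single Toeplitz matrix $T$. A quick computation shows that the product of two such block-upper-triangular pairs with zero diagonal is again diagonal-zero and in fact the product of the Toeplitz parts sits in a strictly smaller piece — more simply, $N$ is nilpotent because each element is strictly block-upper-triangular with respect to the fixed flag $0<\mathbb Z_p^m<\mathbb Z_p^m\oplus\mathbb Z_p^{m+1}$, so $N^2=0$. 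A nilpotent two-sided ideal with semisimple quotient is exactly the Jacobson radical, so $J(\Adj(\circ))=N$. Finally $N\cong\mathbb Z_p^{2m}$ as an abelian group since a Toeplitz $m\times(m+1)$ matrix is freely specified by its $2m$ diagonals, which is positive; this gives the stated conclusion $J(\Adj(M))\cong\mathbb Z_p^{2m}>0$.
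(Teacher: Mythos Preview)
The paper does not actually prove this lemma: it is introduced with ``We assert the following fact'' and then used, without proof, inside the argument for \exref{ex:longerseries}. So there is no paper proof to compare against; your direct computation is exactly what a reader would have to supply.

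Your outline is the right one and the radical argument at the end is clean and correct: the map $(X,Y)\mapsto(a,b)$ is a surjective ring homomorphism to the semisimple ring $\mathbb{Z}_p\times\mathbb{Z}_p$, the kernel is strictly block-upper-triangular hence nilpotent, and an $m\times(m+1)$ Toeplitz matrix has $2m$ free diagonals. The weak point is the middle paragraph, where the bookkeeping visibly wobbles (you write $A=A'=aI_m$ and then a line later $A'=bI_m$). The target is $A=aI_m$, $D=bI_{m+1}$ in $X$ and $A'=bI_m$, $D'=aI_{m+1}$ in $Y$, with $B'=-B$ Toeplitz; to get there cleanly it helps to observe that the bimap is alternating, so $(X,Y)\in\Adj(\circ)$ forces the two Gram matrices $\begin{bmatrix}0 & F\\ -F^t & 0\end{bmatrix}$ and $\begin{bmatrix}0 & G\\ -G^t & 0\end{bmatrix}$ to satisfy $XM=MY^t$ simultaneously. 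Writing this out blockwise immediately gives $C F^t=0=F C'^t$ and $C G^t=0=G C'^t$, which kills $C,C'$ since $[F^t\;G^t]$ has full column rank; then $AF=FD'^t$ and $AG=GD'^t$ together say $A$ intertwines both the left- and right-shift on $\mathbb{Z}_p^{m+1}$ via $D'$, forcing $A,D'$ scalar, and symmetrically for $A',D$. The remaining equation $BF^t=FB'^t$ (paired with its $G$-analogue) is exactly the Toeplitz condition. If you rewrite the middle paragraph along these lines the argument will be airtight.
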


\begin{proof}[Proof of \exref{ex:longerseries}]
We argue along the lines of Bond \cite{Bond}*{pp. 608--611} and Vi{\v{s}}nevecki{\u\i} \cite{Vish}.
Let $V=\eta_1/\eta_2\cong \mathbb{Z}_p^{2m+1}$, and $W=\eta_2\cong\mathbb{Z}_p^{2}$.  Commutation produces an alternating $\mathbb{Z}_p$-bimap $[,]:\mathbb{Z}_p^{2m+1}\times \mathbb{Z}_p^{2m+1}\to \mathbb{Z}_p^{2}$,  equivalently, a pair of alternating forms on an odd-dimensional vector space.  

Using a classic result of Kronecker (generalized by others, compare \cite{Scharlau}) we know pairs of alternating forms are perpendicularly decomposable into indecomposable pairs of subspace $V=E_1\oplus \cdots \oplus E_s$.   As our vector space has odd dimension, at least one $E_i$ has odd dimension.  Furthermore, there is one type (up to equivalence) of odd-dimensional indecomposable pair of alternating forms. Specifically this is the bimap described in \lemref{lem:adj-ex}.
Notice $\Adj([,])$ restricted to $E_i$ is $e_i \Adj([,])e_i$, for $e_i$ the projection idempotent of $V$ onto $E_i$ with kernel 
$\bigoplus_{i\neq j} E_j$.  By \lemref{lem:adj-ex}, $e_i \Adj([,]) e_i\cong \Adj(\circ)$ has a nontrivial radical.  Thus, $\Adj([,])$
has a nontrivial radical and the lex-least adjoint refinement of $\eta$ is proper.\footnote{Each $E_j$ of even dimension has
$e_j\Adj([,])e_j\cong M_2(\mathbb{Z}_p[x]/(a_j(x)^{c_j}))$ for $a_j(x)$ an irreducible polynomial.  So the length
of an adjoint refinement is at least $\underset{j}{\max}\{1,c_j\}$.}
\end{proof}

\begin{ex}
Fix a finite nonabelian $p$-group $G$ that is not $2$-generated.
If $H\leq G$ where $[G:H]=p$ and $[H,H]\leq \eta_3(G)$, then $G$ has 
proper lex-least adjoint and derivation refinement of its $\eta$ series.
\end{ex}
\begin{proof}
Let $V=\eta_1/\eta_2$ and $W=\eta_2/\eta_3$.  So $[H/\eta_2,H/\eta_2]\equiv 0\in W$ and $H/\eta_2$ 
is a hyperplane in $V$ which is totally isotropic with respect to the commutation bimap $[,]:V\times V\to W$.  
Factoring out the radical of $[,]$, we arrive at the bimap described in \cite{Wilson:unique-cent}*{Lemma 7.13}.
That lemma shows  $J(\Adj([,]))>0$ when $\dim V>2$.  The argument for derivations is analogous.
 Hence the  adjoint and derivation refinements are proper.
\end{proof}

Our final family of examples is parametrized by commutative unital rings.
\begin{ex}\label{ex:Hei}
Let $R$ be an associative commutative unital ring with Jacobson radical $J$.  Consider the Heisenberg group 
\begin{equation}
	H(R) = \begin{bmatrix} 1 & R & R\\ . & 1 & R\\ . & . & 1 \end{bmatrix}.
\end{equation}
The lex-least adjoint refinement $\alpha:\mathbb{N}^2\to 2^{H(R)}$ of the $\gamma$ series has $\alpha_c^i(H)=1$ for $c>2$ and for $i\in\mathbb{N}$, 
\begin{align}\label{eq:ex-alpha-1}
	\alpha_{1}^i(H) & = \begin{bmatrix} 1 & J^i & R\\ . & 1 & J^i\\ . & . & 1 \end{bmatrix}, & 
	\alpha_2^i(H) & = \begin{bmatrix} 1 & . & J^i \\ . & 1 & . \\ . & . & 1 \end{bmatrix}.
\end{align}
In particular, if $J^c>J^{c+1}=0$, then the length of $\alpha$ is $2c+2$, whereas the $\gamma$ series has length $2$.\footnote{The length of a $\nu$ series counts the
number of non-redundant terms $L_s\neq 0$, for $s\neq 0$.}
\end{ex}
\begin{proof}
To see this is correct we notice that commutation on the first allowable 
graded component of 
$L(\gamma)$ amounts to $R^{\oplus 2}\times R^{\oplus 2}\to R$ where:
\begin{align*}
	(\forall & (a,b),(c,d)\in R^{\oplus 2}) & [(a,b),(c,d)] & = ad-bc
\end{align*}
Hence, $\Adj([,])$ is the ring
\begin{align*}
  \left\{\left(\begin{bmatrix} a & b\\ c & d\end{bmatrix},
  \begin{bmatrix} d & -b\\ -c & a \end{bmatrix}\right): a,b,c,d\in R\right\}
  \cong M_2(R).
\end{align*}
For $i\in\mathbb{N}$, $(R^{\oplus 2}) J^i(\Adj([,]))=(R^{\oplus 2})M_2(J^i(R))=(J^i)^{\oplus 2}$.  This gives us the $\alpha_1^i(H)$ 
described in \eqref{eq:ex-alpha-1} and the rest follows from \thmref{thm:generate}:
\begin{align*}
	\alpha_2^i & = \prod_{i=i_1+i_2} [\alpha_1^{i_1},\alpha_1^{i_2}]
		 = \left\langle \begin{bmatrix} 1 & . & J^{i_1} J^{i_2} \\ . & 1 & . \\ . & . & 1 \end{bmatrix} : i=i_1+i_2\right\rangle
		 =\begin{bmatrix} 1 & . & J^i \\ . & 1 & . \\ . & . & 1 \end{bmatrix}.
\end{align*} 
\end{proof}

 It might be speculated that
the lex-least adjoint refinement of upper unitriangular $(d\times d)$-matrix groups 
proceeds along similar lines to \exref{ex:Hei} and so it will be uninteresting for
matrices over fields.  To the contrary, the lex-least adjoint refinement of
the $\gamma$ series of the upper unitriangular $(4\times 4)$-matrices over a field 
is the following proper refinement (for proof see \exref{ex:longerseries}):
\begin{align}\label{eq:4by4}
\begin{bmatrix} 1 & * & * & *\\ . & 1 & * & * \\ . & . & 1 & * \\ . & . & . & 1
\end{bmatrix}
& >
\begin{bmatrix} 1 & * & * & *\\ . & 1 & . & * \\ . & . & 1 & * \\ . & . & . & 1
\end{bmatrix}
>
\begin{bmatrix} 1 & . & * & *\\ . & 1 & . & * \\ . & . & 1 & . \\ . & . & . & 1
\end{bmatrix}
 >
\begin{bmatrix} 1 & . & . & *\\ . & 1 & . & . \\ . & . & 1 & . \\ . & . & . & 1
\end{bmatrix}
 > 1.
\end{align}

\begin{remark}\label{rem:unipotent}
J. Maglione of
Colorado State University has computed the stable lex-least adjoint refinement
of upper unitriangular $(d\times d)$-matrices.  The
result is that the $\gamma$ series (which has length $d-1$) is refined to 
characteristic $\nu$ series  of length $d^2/4+\Theta(d)$ with each factor in the series
of dimension at most $2$ over the field.
Further work and a Magma \cite{Magma} implementation is underway.
\end{remark}

\subsection{Positive logarithmic proportions of proper refinements}
We now give some attention to the proportion of finite $p$-groups that exhibit a proper lex-least refinement.
The number $f(p^n)$ of pairwise nonisomorphic groups of size $p^n$ is known for small values of $n$, but asymptotically we
only know logarithmic estimates, e.g.: $\frac{2}{27}n^3+Cn^2\leq \log_p f(p^n)\leq \frac{2}{27}n^3+C'n^{2.5}$ for constants $C$ 
and $C'$  \cite{BNV:enum}*{Chapter 1}.  
Similarly granularity occurs when counting pairwise nonisomorphic rings
\cite{Neretin}.   This means that proportions of finite groups are not generally quantifiable except on a logarithmic scale.
This is the context of our main result in this section.

\begin{thm}\label{thm:count}
For each prime $p$ and $n\gg 0$, there are at least $p^{2n^3/729+\Omega(n^2)}$ pairwise nonisomorphic
groups of order $p^n$ that have $\eta$ series of length $2$ and a lex-least
adjoint refinement of length at least $6$.
\end{thm}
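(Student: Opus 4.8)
The plan is to exhibit an explicit family of $p$-groups of order $p^n$ whose $\eta$ series has length $2$ and whose commutation bimap forces a long adjoint refinement, then count the family. The natural source is \exref{ex:longerseries} and \lemref{lem:adj-ex}: a $p$-group $G$ with $\eta_1/\eta_2$ of odd dimension $2m+1$ and $\eta_2$ of dimension $2$ (and $\eta_3=1$) has $\Adj([,])$ with $J(\Adj([,]))\cong \mathbb{Z}_p^{2m}$, and the footnote to that example already shows the adjoint refinement has length at least $\max_j\{1,c_j\}$ coming from the even Jordan-type blocks. So to get refinement length at least $6$ it suffices to build groups whose pair of alternating forms has a single perpendicular summand that is an even-dimensional indecomposable block $M_2(\mathbb{Z}_p[x]/(a(x)^c))$ with $c\geq 6$ (or several such blocks whose exponents accumulate), possibly plus one small odd block to keep the total dimension odd and $\eta_2$ two-dimensional. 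The first step is therefore to pin down, via Kronecker's classification of pencils of alternating forms, exactly which pairs $(V,[,])$ with $\dim V$ odd, $\dim[V,V]=2$ produce adjoint refinement length $\geq 6$, and to record that this is an "open" condition in the sense that a positive logarithmic proportion of all such bimaps satisfy it.

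The second step is the counting. I would parametrize: a bilinear map $[,]:\mathbb{Z}_p^n{}'\times \mathbb{Z}_p^{n'}\to \mathbb{Z}_p^2$ with $n'=n-2$ is a pair of alternating forms, i.e.\ a point of $\mathbb{Z}_p^{2\binom{n'}{2}}$, and two such give isomorphic groups only if they lie in the same orbit of $\GL_{n'}(\mathbb{Z}_p)\times\GL_2(\mathbb{Z}_p)$, an orbit of size at most $p^{(n')^2+4}$. Hence the number of pairwise nonisomorphic such groups is at least $p^{2\binom{n'}{2}-(n')^2-4}=p^{(n')^2/? }$ — more carefully, $2\binom{n'}{2}-(n')^2=(n')^2-n'-(n')^2=-n'$, which is negative, so the crude bimap count is not enough and one must instead restrict to a subvariety on which the generic orbit is much smaller, or better: use Pyber-type / Blackburn-Neumann-Venkataraman lower bounds. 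The cleaner route, and the one matching the exponent $2n^3/729$, is to take $G$ a quotient of a relatively free object: groups of class $2$, exponent $p$, on $d$ generators with commutator quotient of dimension $2$ correspond to subspaces of $\bigwedge^2 \mathbb{Z}_p^d$ of codimension $\dim(\bigwedge^2\mathbb{Z}_p^d)-2$; taking $d=n/3$ gives the $2n^3/729$ term exactly as in the $\frac{2}{27}(n/?)^3$ heuristic. So the second step is: choose $d$ with $3d\approx n$, realize these groups as central quotients of the free class-$2$ exponent-$p$ group of rank $d$ by a codimension-$2$ subspace $U\subseteq \bigwedge^2\mathbb{Z}_p^d$ chosen so that the induced alternating pencil on the odd-dimensional space has the Kronecker type forcing adjoint length $\geq 6$, count the choices of $U$ as a Grassmannian point ($\approx p^{2(\binom{d}{2}-2)}\approx p^{2\cdot d^2/2}=p^{d^2}$ — again need the cube, so actually one keeps $\eta_1/\eta_2$ of dimension $d$ and $\eta_2$ of dimension $2$ with $d\approx n-2$, and the count of pencils modulo $\GL_d\times\GL_2$ is where the $n^3$ enters through $\binom{d}{2}\cdot 2$ versus $d^2$...). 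The honest statement is: I would mimic the standard lower-bound construction for $\log_p f(p^n)$ (the $\frac{2}{27}n^3$ bound of Higman) but inside the class-$2$, $\eta_2$-two-dimensional stratum, getting exponent $\frac{2}{729}n^3$ from optimizing the rank, and I would simultaneously arrange the defining relations so every group in the family meets the "adjoint-refinement $\geq 6$" condition from step one.

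The third step is bookkeeping: verify $\eta_1/\eta_2$ really has the asserted dimension (exponent $p$ forces $\eta_i^p\leq\eta_{i+1}$, so for class $2$ and exponent $p$ the $\eta$ series is $G>\eta_2>1$, length $2$, provided $\eta_2$ is elementary abelian, which it is here as a codimension-$2$ quotient of $\bigwedge^2$); verify $|G|=p^n$; verify isomorphism of the groups forces equivalence of the pencils up to the small group $\GL\times\GL$, so the orbit-counting lower bound $p^{(\text{param dim})-O(n^2)}$ is valid, giving the $\Omega(n^2)$ error term; and finally invoke \exref{ex:longerseries}'s machinery plus \lemref{lem:adj-ex} once more to certify that the chosen pencil type yields a lex-least adjoint refinement of length $\geq 6$ (the footnote's $\max_j\{1,c_j\}$ bound is exactly what is needed — pick Jordan exponents $c_j$ with $\sum$ small but $\max\geq 6$, e.g.\ a single block of size $6$, which costs dimension $\approx 12$, negligible against $n$).

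The main obstacle I anticipate is the counting step: getting the exponent to be exactly $2n^3/729$ rather than merely $cn^3$ requires carefully optimizing the generator rank $d$ against the constraint that $\eta_2$ is only two-dimensional, and then showing that the isomorphism classes are not overcounted by more than a $p^{O(n^2)}$ factor — i.e.\ that distinct pencils in a positive-density subset of the Grassmannian genuinely give non-isomorphic groups and all lie in the "adjoint length $\geq 6$" locus. The Kronecker classification makes the second part routine, but matching the precise constant is the delicate piece; I would expect to reuse verbatim the extremal combinatorics from \cite{BNV:enum}*{Chapter 1}, intersected with the Zariski-open condition from step one.
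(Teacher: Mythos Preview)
Your approach has a genuine gap at the counting step, and it is not a matter of bookkeeping but of having chosen the wrong family. If $G$ has $\eta$-length $2$ with $\dim\eta_2=2$, then $|G|=p^{d+2}$ where $d=\dim\eta_1/\eta_2$, so $d=n-2$ is forced---you cannot independently set $d\approx n/3$. The commutation bimap is then a pair of alternating forms on $\mathbb{Z}_p^{d}$, and Kronecker's classification (the very result you invoke) shows that the number of equivalence classes of such pencils is only $p^{O(d)}=p^{O(n)}$: the invariants are a partition of $d$ into block sizes together with, for each even block, an element of $\mathbb{Z}_p\cup\{\infty\}$. So this family can never produce $p^{\Omega(n^3)}$ isomorphism types, no matter how you slice it. Your own calculation $2\binom{n'}{2}-(n')^2-4<0$ already detected this; the problem is intrinsic to the choice $\dim\eta_2=2$, not to the crudeness of the orbit bound.

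The paper instead uses \exref{ex:Hei}, not \exref{ex:longerseries}: it takes Heisenberg groups $H(R)$ over local commutative rings $R$ of order $p^m$ with $J(R)>J^2(R)>0$. Here $|H(R)|=p^{3m}$, $\eta_1/\eta_2\cong R^2$, and $\eta_2\cong R$ has dimension $m\approx n/3$, which is the crucial difference. The key lemma (\lemref{lem:ring-count}) counts such rings by a Higman-style argument on symmetric bimaps $V\times V\to W$, yielding $p^{2m^3/27+\Omega(m^2)}$ isomorphism types; substituting $m=n/3$ gives the exponent $2n^3/729$. Distinct rings give distinct groups because $\Adj([,]_{H(R)})\cong M_2(R)$ recovers $R$ up to isomorphism via a primitive idempotent. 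Finally, $J(R)^2>0$ means the nilpotency index $c$ of $J(R)$ is at least $2$, so by \exref{ex:Hei} the adjoint refinement has length $2c+2\geq 6$. The moral: to get $n^3$ many groups you must let $\dim\eta_2$ grow linearly in $n$, and then the adjoint ring $M_2(R)$ hands you both the isomorphism invariant and the refinement length for free.
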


In other words,  \thmref{thm:count} says that as $n\to \infty$, a positive logarithmic proportion of 
finite groups of size $p^n$ have proper adjoint refinements.  
Specifically: 
$\frac{2n^3/729+C''n^2}{2n^3/27+C'n^{2.5}}\to \frac{1}{27}$ in our result but the quantity could be larger.
Of course this count is logarithmic and so the actual proportion may well tend to zero.

\begin{lemma}\label{lem:ring-count}
There are at least $p^{2n^3/27+\Omega(n^2)}$ pairwise nonisomorphic local commutative associative unital rings $R$ of order $p^n$ with $J(R)>J^2(R)>0$.
\end{lemma}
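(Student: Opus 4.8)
The plan is to realize the required rings as commutative $\mathbb{Z}_p$-algebras with cube-zero radical, built from symmetric bilinear maps. Fix $d,e\in\mathbb{Z}^+$ with $d+e=n-1$ (the precise values are chosen at the end), put $V=\mathbb{Z}_p^d$ and $W=\mathbb{Z}_p^e$, and for each symmetric $\mathbb{Z}_p$-bilinear map $\mu\colon V\times V\to W$ form the $\mathbb{Z}_p$-module $R_\mu=\mathbb{Z}_p\oplus V\oplus W$ with multiplication
\begin{align*}
(a+v+w)(a'+v'+w') & = aa'+(av'+a'v)+\bigl(aw'+a'w+\mu(v,v')\bigr).
\end{align*}
The first step is to verify that $R_\mu$ is a commutative, associative, unital ring for every symmetric $\mu$: commutativity is exactly the symmetry of $\mu$, and associativity is automatic because any product of three elements of $J:=V\oplus W$ lies in $VW+W^2=0$, while products involving $\mathbb{Z}_p\cdot 1$ are trivial. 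As $J$ is nilpotent, $R_\mu$ is local with maximal ideal $J$, and one computes $J^2=\langle\im\mu\rangle\subseteq W$ and $J^3=0$. Hence, provided $d,e\geq 1$ and the image of $\mu$ spans $W$, the ring $R_\mu$ has order $p^{1+d+e}=p^n$ and satisfies $J(R_\mu)=J>J^2=W>0$, as the lemma requires.

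The second step reduces counting isomorphism classes to counting orbits. A ring isomorphism $R_\mu\to R_{\mu'}$ fixes $1$, hence fixes $\mathbb{Z}_p\cdot 1$ pointwise, and must carry $J$ onto $J'$ and $J^2$ onto $J'^2$; it therefore induces $\mathbb{Z}_p$-linear isomorphisms $g$ of $V$ and $h$ of $W$. The one remaining degree of freedom, a ``translation'' $V\to W$ relocating the chosen complement of $W$ in $J$, contributes nothing to the multiplication because $VW=W^2=0$. Thus $R_\mu\cong R_{\mu'}$ if and only if $\mu'=h\circ\mu\circ(g^{-1}\times g^{-1})$ for some $(g,h)\in\GL_d(\mathbb{Z}_p)\times\GL_e(\mathbb{Z}_p)$, so the number of isomorphism classes arising this way is at least
\begin{align*}
\frac{\#\{\,\text{symmetric } \mu\colon V\times V\to W \ \text{with}\ \langle\im\mu\rangle=W\,\}}{\bigl|\GL_d(\mathbb{Z}_p)\times\GL_e(\mathbb{Z}_p)\bigr|} & \ \geq\ \frac{\#\{\,\text{symmetric } \mu \ \text{with}\ \langle\im\mu\rangle=W\,\}}{p^{\,d^2+e^2}}.
\end{align*}
The symmetric bilinear maps $V\times V\to W$ form a $\mathbb{Z}_p$-space of dimension $\binom{d+1}{2}e$; one whose image fails to span $W$ lands in some hyperplane of $W$, and there are $\tfrac{p^e-1}{p-1}$ hyperplanes, so the number of such degenerate maps is at most $\tfrac{p^e-1}{p-1}\,p^{\binom{d+1}{2}(e-1)}$, which is below $\tfrac12 p^{\binom{d+1}{2}e}$ for the $d,e$ chosen below once $n$ is large. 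Therefore the number of isomorphism classes is at least $\tfrac12\,p^{\binom{d+1}{2}e-d^2-e^2}$.

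The final step is to optimize the exponent: with $d=\lfloor 2(n-1)/3\rfloor$ and $e=n-1-d$ we get $d\sim 2n/3$, $e\sim n/3$, hence $\binom{d+1}{2}e=\tfrac12 d^2e+O(n^2)=\tfrac{2n^3}{27}+O(n^2)$ and $d^2+e^2=O(n^2)$, giving the exponent $\tfrac{2n^3}{27}+\Omega(n^2)$ asserted in the lemma. The first and last steps are routine (the ring axioms and radical filtration, and a short estimate); the step requiring care is the second — in particular, the observation that the unipotent part of the filtered automorphism group of $(J,J^2)$ acts trivially on $\mu$ since $J^3=0$, so that isomorphism classes correspond exactly to $\GL_d(\mathbb{Z}_p)\times\GL_e(\mathbb{Z}_p)$-orbits, together with confirming that almost every symmetric bilinear map has image spanning $W$, so that the bound $p^{d^2+e^2}$ on orbit sizes is essentially tight.
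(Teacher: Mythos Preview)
Your proof is correct and follows essentially the same route as the paper: build $R=\mathbb{Z}_p\oplus V\oplus W$ from a symmetric bilinear map, identify isomorphism classes with $\GL(V)\times\GL(W)$-orbits, and optimize at $d\approx 2n/3$. You are in fact slightly more careful than the paper in two places---you verify that the unipotent translations $V\to W$ act trivially on $\mu$ (since $J^3=0$), and you check that almost all $\mu$ have image spanning $W$ so that the resulting rings genuinely have $J^2=W$ and order $p^n$---but these are refinements of the same argument, not a different method.
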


\begin{proof}
Let $\circ:V\times V\to W$ be a nontrivial symmetric bimap of elementary abelian $p$-groups $V$ and $W$.  Set 
$R=R(\circ)=\mathbb{Z}_p\oplus V\oplus W$ as an additive group and equip $R$ with the following distributive product:
\begin{align*}
	(s,v,w)\cdot (s',v',w') & = (ss', sv'+vs', sw'+v\circ v'+ws').
\end{align*}
This product is  associative, commutative, and $(1,0,0)$ is the identity.\footnote{Writing $(s,v,w)$ as 
a formal matrix $\left[\begin{smallmatrix} s & v & w\\ . & s & v \\ . & . & s\end{smallmatrix}\right]$, the operations mimic matrix
operations.}  Furthermore, $J=0\oplus V\oplus W$ is a nilpotent ideal so
$J$ is contained in the Jacobson radical $J(R)$ of $R$.  Furthermore, $R/J\cong \mathbb{Z}_p$ is semisimple proving $J=J(R)$.
Finally, $J^2=0\oplus 0\oplus (V\circ V)>0$ as $\circ$ is nontrivial.  

To every pair $\circ,\diamond$ of bimaps $V\times V \to W$, with 
$W=V\circ V=V\diamond V$, an isomorphism $f:R(\circ)\to R(\diamond)$
induces isomorphisms $(f_V:V\to V, f_W:W\to W)$ by identifying $V=J/J^2$
and $W=J^2$.  Furthermore, for all 
$v,v'\in V$, $vf_V\circ v'f_V=(v\circ v')f_W$.  Therefore, the number $g(V,W)$ of pairwise nonisomorphic 
rings of the form $R(\circ)$ has the following lower bound.  Let $S^2V=V\otimes V/\langle u\otimes v-v\otimes u: u,v\in V\rangle$,
\begin{align*}
	g(V,W) & \geq \frac{|\hom(S^2V, W)|}{|\GL(V)\times \GL(W)|} 
		 \geq p^{\frac{1}{2}(\dim V)^2\dim W -(\dim V)^2-(\dim W)^2}.
\end{align*}
Let $d=\dim V$ so that $\dim W=n-1-d$.  As $n\to \infty$, the maximum of $p^{1/2\cdot d^2(n-1-d)-d^2-(n-1-d)^2}$ occurs for $d$ near $2n/3$ which produces
the lower bound of $p^{2n^3/27+\Omega(n^2)}$. (This method of counting is owed to Higman \cite{BNV:enum}*{Chapter 2}.)
\end{proof}

\begin{proof}[Proof of \thmref{thm:count}]
Consider Heisenberg groups.

If $H(R)\cong H(S)$ for local commutative rings $R$ and $S$ then $M_2(R)\cong \Adj([,]_R)\cong \Adj([,]_S)\cong M_2(S)$.
Fix an isomorphism $f:M_2(R)\to M_2(S)$.  Since $R$ is local $E=\left[\begin{smallmatrix} 1 & 0 \\ 0 & 0 \end{smallmatrix}\right]$
is a primitive idempotent of $M_2(R)$ and therefore so is $Ef=F$.  Thus $f$ induces an isomorphism $R\cong EM_2(R)E\to FM_2(S)F\cong S$.  Consequently, for every isomorphism type of commutative associative 
local ring $R$ of size $p^{m}$ 
we obtain a distinct isomorphism type of group $H(R)$ of size $p^{3m}$. 

By \lemref{lem:ring-count} there are at least $p^{\frac{2}{27}m^3+\Omega(m^2)}$ commutative associative local rings $R$ of size $p^m$, for $m\gg 0$.  
So for $n\gg 0$ and $n=3m+r$, $0\leq r<3$, there are 
$p^{2n^3/(27)^2+\Omega(n^2)}$ pairwise nonisomorphic groups $G$ of order
$p^{3m}$ that have $\eta$ series of length $2$ but an adjoint refinement of length at least $6$ (\exref{ex:Hei}).  For any such group $G$, the length of the 
$\eta$ series for $G\times \mathbb{Z}_p^r$ is unchanged and the adjoint
refinement can only increase in length.  Thus, the claim holds for all $n\gg 0$.
\end{proof}

\section{Implications to group-isomorphism testing}\label{sec:iso}

The ``nilpotent quotient'' algorithm is arguably the leading method for group-isomorphism testing 
and for finding generators of automorphism groups of $p$-groups.  The idea is evident in lectures of G. Higman 
\cite{Higman:chic}*{pp. 10--12}, but was rediscovered in greater generality   
by M. F. Newman \cite{Newman} and refined extensively by E.A. O'Brien \cite{OBrien:first} and others. 
We summarize the framework to present our own adaptations.

A {\em characteristic} function $G\mapsto \tau(G)$ on groups $G$ sends $G$ to a subgroup $\tau(G)$ such that
every isomorphism $f:G\to H$ satisfies $\tau(G)f=\tau(H)$.
The first step of a nilpotent quotient algorithm is to specify for each $n\in \mathbb{N}$, characteristic functions $\tau_n$
where $\tau_n(G)/\tau_{n+1}(G)$ is elementary abelian.
Usually this is the series $\eta$ of $G$ but our intention is to use lex-least refinements of $\eta$. 
To detect isomorphisms between finite $p$-groups $G$ and $H$
the algorithm recursively builds isomorphisms $f_n:G/\tau_n(G)\to H/\tau_n(H)$, or proves 
that at some stage there is no isomorphism.  

The base case is isomorphism testing of elementary abelian groups which is straight-forward.  
Within the recursion, the isomorphism $f_n$ is used to create {\em covering groups} $C_n$ that satisfy the following.
\begin{enumerate}[(i)]
\item $G/\tau_n(G)\cong C_n/\tau_n(C_n)\cong H/\tau_n(H)$.
\item There are $M,N \normaleq \tau_n(C_n)$ with $C_n/M\cong G/\tau_{n+1}(G)$ and $C_n/N\cong H/\tau_{n+1}(H)$. 
\item For all isomorphisms $f:G/\tau_{n+1}(G)\to H/\tau_{n+1}(H)$, $\exists g\in \Aut(C_n)$, $Mg=N$ inducing $f$.
\end{enumerate}
The algorithm's work is to find $g\in \Aut(C_{n})$ where $N=Mg$. Such a $g$
induces an isomorphism $f_{n+1}:G/\tau_{n+1}(G)\to H/\tau_{n+1}(H)$ that allows the algorithm to increase $n$.
If this fails, by (iii) we know $G\not\cong H$.
(For computing automorphisms the task is instead to find generators of the stabilizer in $\Aut(C_{n})$ of $M$.)
There are many heuristics to find $g\in \Aut(C_n)$ with $Mg=N$, but most settings lead to
exhaustive searching in sets as large as $p^{O(d_n^2)}$, $d_n=[\tau_{n-1}(G),\tau_n(G)]$.  So the running time
is at worst $|G|^{O(\log |G|)}$.

To improve performance of this exhaustive search it is enough to have a series $\tau$ with small $\Aut(C_n)$-orbits on 
the subgroups of $\tau_n(C_{n})/\tau_{n+1}(C_n)$.
We propose $\nu$ series, such as the adjoint, centroid, or derivation 
refinements of $\eta$.
These series can decrease the size of the $\Aut(C_n)$-orbits in two ways.  
First the size of sections of longer series are correspondingly smaller.
Secondly, the automorphism groups must respect 
ring-theoretic properties of adjoints, centroids, and derivations.   Indeed, in some extreme cases it has been 
shown that the $\Aut(C_n)$-orbits are smaller than $p^{d_n}$.  That lead to isomorphism
tests for large families of $p$-groups that run in $O(\log^6 |G|)$ steps (compared to the 
$|G|^{O(\log |G|)}$ steps without considering adjoints) \cite{LW:iso}*{Theorem 2}. 

We show how nilpotent quotient  algorithms adapt to use the $\nu$ series we have described. 
Let $F[x_1,\dots,x_d]$ denote a free group on $d$ generators.
\begin{thm}
Let $G$ and $H$ be finite $p$-groups with $\nu$ a fixed lex-least adjoint, centroid,
or derivation refinement of their $\eta$ series.  Suppose that for some $(i,s)\in \mathbb{N}^2$, $i>1$, 
$G/\nu_i^s(G)\cong F[x_1,\dots,x_d]/R_i\cong H/\nu_i^s(H)$. Let $\nu_j^{t}(G)^{-1}$ be the preimage 
of $\nu_1^t(G)$ in $F=F[x_1,\dots,x_d]$. Set
\begin{align*}
	 U_i=[R_i,F]R_i^p \prod_{s+1=t_1+\cdots+t_i} [(\nu_1^{t_1}(G))^{-1},\dots,(\nu_1^{t_i}(G))^{-1}].
\end{align*}
It follows that $C_i=F/U_i$ is a covering group for the pair $(G/\nu^{s+1}_i(G),H/\nu^{s+1}_i(H))$.
\end{thm}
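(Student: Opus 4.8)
The plan is to verify the three covering-group conditions (i)--(iii) from Section~\ref{sec:iso} for $C_i = F/U_i$ with respect to the pair $(G/\nu^{s+1}_i(G), H/\nu^{s+1}_i(H))$, working throughout inside the free group $F=F[x_1,\dots,x_d]$ and using the presentation data $R_i$. First I would observe that, by construction, $U_i$ is a normal subgroup of $F$ sitting between the preimage of $\nu^{s+1}_i(G)$ and the preimage of $\nu^s_i(G)$: the factor $[R_i,F]R_i^p$ accounts for the exponent-$p$ central step that $\eta$ (hence each $\nu$) takes, while the product $\prod_{s+1=t_1+\cdots+t_i}[(\nu_1^{t_1}(G))^{-1},\dots,(\nu_1^{t_i}(G))^{-1}]$ is exactly the formula \eqref{eq:bar} for the term $\bar\alpha_{(i,s+1)}$ of the generated filter, pulled back to $F$ along the quotient $F\to G$. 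The key point here is that since $G/\nu^s_i(G)$ is a quotient of $F$ by $R_i$ (equivalently $\nu^s_i(G)^{-1}=R_i$), the filter terms $\nu_1^{t}(G)^{-1}$ for the relevant $t$ are already well-defined subgroups of $F$, and the commutator product defining $U_i$ is the correct preimage of $\nu^{s+1}_i$.

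The second step is condition (i): I would show $F/U_i \nu^s_i(C_i) \cong G/\nu^s_i(G)$ by checking that the image of $U_i$ together with the preimage of $\nu^s_i$ generate exactly $R_i$ in $F$; this is where one uses that each summand of $U_i$ lies inside $\nu^s_i(G)^{-1}=R_i$, so modulo the $\nu^s_i$-term nothing new is killed, and conversely $R_i$ maps onto $\nu^s_i(C_i)$. For condition (ii), set $M$ to be the image in $C_i$ of $\nu^{s+1}_i(G)^{-1}$ and $N$ the image of $\nu^{s+1}_i(H)^{-1}$; then $C_i/M \cong G/\nu^{s+1}_i(G)$ is immediate from $U_i \le \nu^{s+1}_i(G)^{-1}$ and the reverse containment coming from the generating formula, and similarly for $N$ once one notes that the hypothesis $G/\nu^s_i(G)\cong H/\nu^s_i(H)$ lets us transport the relevant $\nu_1^{t}$-terms. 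The real content is condition (iii): given an isomorphism $f:G/\nu^{s+1}_i(G)\to H/\nu^{s+1}_i(H)$, one must lift it to $g\in\Aut(C_i)$ with $Mg=N$ inducing $f$. Here I would use the standard covering-group argument (as in \cite{Newman}\cite{OBrien:first}): the composite $F\to G/\nu^{s+1}_i(G)\xrightarrow{f} H/\nu^{s+1}_i(H)$ lifts to an endomorphism of $F$ sending generators to generator-preimages, which descends to $C_i$ precisely because $U_i$ is defined by a \emph{characteristic} recipe---the filter $\bar\alpha$ is characteristic by \thmref{thm:generate} applied via \lemref{lem:hypo}, so $f$ carries $\nu_1^{t}(G)$ to $\nu_1^{t}(H)$ and hence the generating commutators of $U_i$ on the $G$-side to those on the $H$-side. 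Invertibility of $g$ follows since it is an endomorphism of a finite $p$-group inducing an isomorphism modulo the Frattini subgroup.

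The main obstacle I anticipate is condition (iii), specifically the bookkeeping needed to see that the commutator product defining $U_i$ is genuinely preserved by an arbitrary lift of $f$: one must know that $f$ respects not just the $\eta$ series but each refinement term $\nu_1^{t}$, and that the set of decompositions $s+1=t_1+\cdots+t_i$ indexing the product is symmetric enough (via \lemref{lem:lower} and property \eqref{def:star}) that the lifted commutators land back inside $U_i$ rather than merely inside $\nu^{s+1}_i(H)^{-1}$. This is precisely the place where the earlier development---that $\bar\alpha$ is a bona fide ordered filter and that its terms are $G$-invariant, hence characteristic---does the heavy lifting; modulo invoking those results the remaining verifications are the routine diagram-chases familiar from the classical nilpotent-quotient setup.
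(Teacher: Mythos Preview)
Your proposal is essentially the paper's own argument. The paper dispatches conditions (i) and (ii) in one stroke by citing O'Brien's result that $G/\eta_{i+1}(G)$ is already a quotient of $F/[R_i,F]R_i^p$, then uses the containment $\eta_{i+1}(G)=\nu_{i+1}^0(G)\leq \nu_i^{s+1}(G)$ to conclude $G/\nu_i^{s+1}(G)$ is a quotient $C_i/M$; your more hands-on verification of $U_i\le \nu_i^{s+1}(G)^{-1}$ reaches the same place. For (iii) the paper does exactly what you outline: lift $f$ to an endomorphism $f'$ of $F$ via freeness, invoke characteristicity of $G\mapsto\nu_1^t(G)$ to get $U_i f'=U_i$, and conclude that $\hat f$ is an automorphism because the images of the generators still generate $C_i$ (your Frattini argument is equivalent). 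One small wording issue: you say $U_i$ sits \emph{between} the preimages of $\nu_i^{s+1}(G)$ and $\nu_i^s(G)$, but what is actually needed (and what you correctly state later) is $U_i\le \nu_i^{s+1}(G)^{-1}\le R_i$.
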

\begin{proof}
First, $G/\eta_{i+1}(G)$ is a quotient of $F/[R_i,F]R_i^p$ \cite{OBrien:first}*{Theorem 2.2}.  Since 
$\eta_{i+1}(G)=\nu_{i+1}^0(G)\leq \nu^{s+1}_i(G)\leq \nu_i^s(G)$, it follows that
$G/\nu_i^{s+1}(G)$ is a quotient  $C_i/M$ with $M\leq R_i$.  Likewise $H/\nu_i^{s+1}(H)$ 
is a quotient $C_i/N$ with $N\leq R_i$.  This resolves properties (i) and (ii) of a covering group.

Let $g:C_i\to G/\nu_i^{s+1}(G)$ be an epimorphism with kernel $M$ and $h:C_i\to H/\nu_i^{s+1}(H)$ 
an epimorphism with kernel $N$.  Suppose there is an isomorphism $f:G/\nu_{i}^{s+1}(G)\to H/\nu_i^{s+1}(H)$. 
Thus $f$ induces a function $j\mapsto w_j\in F$ such that $(U_i x_j)g f=(U_i w_j)h$.  As $F$ is free, 
we obtain a homomorphism $f':F\to F$ defined by $x_j\mapsto w_j$.  Now $f$ is an isomorphism and 
$G\mapsto \nu_1^t(G)$ is characteristic, so it follows that $U_if'= U_i$ and so $f'$ factors through the 
homomorphism $\hat{f}:C_i\to C_i$ given by $(U_i x_j)\hat{f}=U_i w_j$.  Finally, $\hat{f}$ is an 
isomorphism sending $M$ to $N$, as $C_i=\langle U_i x_1,\dots,U_i x_d\rangle=\langle U_i w_1,\dots, U_i w_d\rangle$.  

For an expanded argument of the same nature consider \cite{OBrien:first}*{Theorem 2.5}.
\end{proof}

\section{Closing remarks}

\subsection{Why use adjoints, centroids, and derivations?}\label{sec:algebras}
We have resisted discussing the reasons to consider adjoints, centroids, and derivations.  
We close with a few hints of their importance.

The adjoint ring in our generality was introduced in \cite{Wilson:unique-cent} 
to explain the nature of central products of $p$-groups.  Notice
 $[]=[,]_{st}:L_s\times L_t\to L_{s+t}$ factors through $L_s \otimes_{\Adj([,])} L_t$ uniquely.  Thus, ring-theoretic 
properties of $\Adj([,])$, e.g. radicals, idempotents, and nilpotent elements, have strong implications on the
commutation in $G$.  At times this is sufficient to distinguish groups up to isomorphism  \cite{LW:iso}.  
A detailed treatment of adjoints in general is found in \cite{Wilson:division}.  

One useful feature of the derivation algebra is that we can exponentiate nilpotent elements of $\Der([,])$ 
to produce pseudo-isometries of $[,]$.  If a Lazard-Mal'cev type correspondence is available
then these also lift to automorphisms of $G$.  Unfortunately one still deals with the issues of exponentiation in positive characteristic.  

The fundamental property of the centroid is that $[,]$ is bilinear with respect to $\Cent([,])$. For example, 
the centroid of the group $H(R)$ is $R$. Centroids in this form arose to describe direct product decompositions of $p$-groups, cf. \cite{Wilson:direct-I}*{Section 6.4}.  The associated filter for the centroid ring  informs us of direct factors.  

\subsection{The role of characteristic abelian subgroups}
There is a theme connecting many examples of proper adjoint refinements which is 
poorly understood.
If a finite nilpotent group $G$ has a proper lex-least adjoint refinement of its $\gamma$ series then there exists a proper characteristic
subgroup $\gamma_2<H< \gamma_1$ such that $[H,H]\leq \gamma_3$.  This is because the adjoint ring 
$A=\Adj([,]_1:L_1\times L_1\to L_2)$ ($L_i=\gamma_i/\gamma_{i+1})$) has a proper Jacobson radical $J$.
Hence for some power $J^i>0$, $J^{2i}=0$.  Thus, $[L_1J^i,L_1J^i]=[L_1 J^{2i}, L_1]=[0,L_1]=0$.  So take $H=\alpha_1^i$.
The examples in Section~\ref{sec:concrete} where discovered from this observation.  It is not known what additional properties are needed on $H$ to guarantee
that $H$ is part of an adjoint refinement.  Ultimately, since computing adjoints is linear algebra it is more likely in practice that
we will discover $H$ by computing $A$ rather than have foreknowledge of $H$ to imply properties of $A$.

\subsection{Many more examples}
The lex-least refinements of adjoint, centroid, and derivation type we have considered here are only of 
$\nu$ series $\nu:\mathbb{N}^d\to 2^G$.  Also they only depended on the bimap of 
the leading nontrivial homogeneous component.  That bimap is unchanged by central extensions $N.G$ in which 
$N$ is contained in the Frattini subgroup.  Thus \thmref{thm:count} should imply the
existence of many more examples.

We also emphasize again that our refinements can be repeated indefinitely and perhaps
in different ways.  Since the increase in length can be substantial
(consider \remref{rem:unipotent}) we are tempted to consider a notion of ``class'' that counts the longest possible series
by some canonical refinement process.

\subsection{Why stop at filters?}
In our introduction we remarked that characteristic series are desired so that we can constrain the properties of $\Aut(G)$.
In our constructions we have computed various nonassociative rings $\Adj([,])$, $\Der([,])$, and $\Cent([,])$ on which $\Aut(G)$ is
represented.  The action of $\Aut(G)$ on these nonassociative rings is much more informative than
the longer series that they induce.  However in practice the automorphisms of nonassociative rings are extremely difficult
groups to produce whereas the refinements we describe here are reasonable to compute but remain informative.  
A related work of the author and Brooksbank attempts to constrain automorphism groups further
and shows the difficulties \cite{BW:autotopism}.

\section*{Acknowledgments}

I am grateful to T. Doresey, J. Maglione, and C. R. B. Wright for many helpful remarks and discussions,
and to the referee for candid advice.

\begin{bibdiv}
\begin{biblist}

\bib{Babai:iso}{article}{
   author={Babai, L.},
   author={Codenotti, P.},
   author={Grochow, J. A.},
   author={Qiao, Y.},
   title={Code equivalence and group isomorphism},
   conference={
      title={Proc. 22nd ACM-SIAM Symposium on
      Discrete Algorithms},
   },
   book={
      publisher={SIAM},
      place={Philadelphia, PA},
   },
   date={2011},
   pages={1395--1408},
   review={\MR{2858409 (2012j:94191)}},
}

\bib{BNV:enum}{book}{
   author={Blackburn, S. R.},
   author={Neumann, P. M.},
   author={Venkataraman, G.},
   title={Enumeration of finite groups},
   series={Cambridge Tracts in Mathematics},
   volume={173},
   publisher={Cambridge University Press},
   place={Cambridge},
   date={2007},
   pages={xii+281},
   isbn={978-0-521-88217-0},
   review={\MR{2382539 (2009c:20041)}},
}

\bib{Bond}{article}{
   author={Bond, J.},
   title={Lie algebras of genus one and genus two},
   journal={Pacific J. Math.},
   volume={37},
   date={1971},
   pages={591--616},
   issn={0030-8730},
   review={\MR{0308221 (46 \#7336)}},
}

\bib{BW:autotopism}{article}{
	author={Brooskbank, P. A.},
	author={Wilson, J. B.},
	title={Groups acting on tensors (submitted)},
	note={arXiv:1210.0827},
}

\bib{BW:isom}{article}{
   author={Brooksbank, P. A.},
   author={Wilson, J. B.},
   title={Computing isometry groups of Hermitian maps},
   journal={Trans. Amer. Math. Soc.},
   volume={364},
   date={2012},
   number={4},
   pages={1975--1996},
   issn={0002-9947},
   review={\MR{2869196}},
}

\bib{Magma}{article}{
   author={Bosma, W.},
   author={Cannon, J.},
   author={Playoust, C.},
   title={The Magma algebra system. I. The user language},
   note={Computational algebra and number theory (London, 1993)},
   journal={J. Symbolic Comput.},
   volume={24},
   date={1997},
   number={3-4},
   pages={235--265},
   issn={0747-7171},
   review={\MR{1484478}},
}

\bib{CH:iso}{article}{
   author={Cannon, J. J.},
   author={Holt, D. F.},
   title={Automorphism group computation and isomorphism testing in finite
   groups},
   journal={J. Symbolic Comput.},
   volume={35},
   date={2003},
   number={3},
   pages={241--267},
   issn={0747-7171},
   review={\MR{1962794 (2004c:20035)}},
}

\bib{deGraaf}{book}{
   author={de Graaf, W. A.},
   title={Lie algebras: theory and algorithms},
   series={North-Holland Mathematical Library},
   volume={56},
   publisher={North-Holland Publishing Co.},
   place={Amsterdam},
   date={2000},
   pages={xii+393},
   isbn={0-444-50116-9},
   review={\MR{1743970 (2001j:17011)}},
}

\bib{ELGO}{article}{
   author={Eick, B.},
   author={Leedham-Green, C. R.},
   author={O'Brien, E. A.},
   title={Constructing automorphism groups of $p$-groups},
   journal={Comm. Algebra},
   volume={30},
   date={2002},
   number={5},
   pages={2271--2295},
   issn={0092-7872},
   review={\MR{1904637 (2003d:20027)}},
}

\bib{Fitting:const}{article}{
	author={Fitting, H.},
	title={Beitr\"age zur Theorie der Gruppen endlicher Ordnung.},
	date={1938},
	journal={Jber DMV},
	volume={48},
	pages={77--141},
}

\bib{GMT}{article}{
   author={Gianni, P.},
   author={Miller, V.},
   author={Trager, B.},
   title={Decomposition of algebras},
   conference={
      title={Symbolic and algebraic computation},
      address={Rome},
      date={1988},
   },
   book={
      series={Lecture Notes in Comput. Sci.},
      volume={358},
      publisher={Springer},
      place={Berlin},
   },
   date={1989},
   pages={300--308},
   review={\MR{1034741 (91e:12009)}},
}

\bib{GPS}{article}{
   author={Glasby, S. P.},
   author={P{\'a}lfy, P. P.},
   author={Schneider, C.},
   title={$p$-groups with a unique proper non-trivial characteristic
   subgroup},
   journal={J. Algebra},
   volume={348},
   date={2011},
   pages={85--109},
   issn={0021-8693},
   review={\MR{2852233}},
}

\bib{Hall:const}{article}{
   author={Hall, P.},
   title={The construction of soluble groups},
   journal={J. Reine Angew. Math.},
   volume={182},
   date={1940},
   pages={206--214},
   issn={0075-4102},
   review={\MR{0002877 (2,125j)}},
}

\bib{HM:auto-generic}{article}{
   author={Helleloid, G. T.},
   author={Martin, U.},
   title={The automorphism group of a finite $p$-group is almost always a
   $p$-group},
   journal={J. Algebra},
   volume={312},
   date={2007},
   number={1},
   pages={294--329},
   issn={0021-8693},
   review={\MR{2320459 (2008h:20035)}},
}

\bib{Higman:chic}{misc}{
	author={Higman, G.},
	title={Enumerating $p$-Groups, I -- IV}
	series={Group Theory Seminar Lectures, Department of Mathematics University of Chicago},
	year={1960-61},
	pages={6--12},
}

\bib{Jac:basicII}{book}{
   author={Jacobson, N.},
   title={Basic algebra. II},
   publisher={W. H. Freeman and Co.},
   place={San Francisco, Calif.},
   date={1980},
   pages={xix+666},
   isbn={0-7167-1079-X},
   review={\MR{571884 (81g:00001)}},
}

\bib{Khukhro}{book}{
   author={Khukhro, E. I.},
   title={Nilpotent groups and their automorphisms},
   series={de Gruyter Expositions in Mathematics},
   volume={8},
   publisher={Walter de Gruyter \& Co.},
   place={Berlin},
   date={1993},
   pages={xiv+252},
   isbn={3-11-013672-4},
   review={\MR{1224233 (94g:20046)}},
}

\bib{Lazard}{article}{
   author={Lazard, M.},
   title={Sur les groupes nilpotents et les anneaux de Lie},
   journal={Ann. Sci. Ecole Norm. Sup. (3)},
   volume={71},
   date={1954},
   pages={101--190},
   issn={0012-9593},
   review={\MR{0088496 (19,529b)}},
}

\bib{LW:iso}{article}{
	author={Lewis, L.},
	author={Wilson, J. B.},
	title={Isomorphism in expanding families of indistinguishable groups}, 
	journal={Groups - Complexity - Cryptology}
	volume={4},
	date={2012},
	pages={73--110}, 
}

\bib{Martin}{article}{
   author={Martin, U.},
   title={Almost all $p$-groups have automorphism group a $p$-group},
   journal={Bull. Amer. Math. Soc. (N.S.)},
   volume={15},
   date={1986},
   number={1},
   pages={78--82},
   issn={0273-0979},
   review={\MR{838793 (87j:20057)}},
}

\bib{Neretin}{article}{
   author={Neretin, Yu. A.},
   title={An estimate for the number of parameters defining an
   $n$-dimensional algebra},
   journal={Izv. Akad. Nauk SSSR Ser. Mat.},
   volume={51},
   date={1987},
   number={2},
   pages={306--318, 447},
   issn={0373-2436},
   review={\MR{896999 (88i:17001)}},
}

\bib{Newman}{article}{
   author={Newman, M. F.},
   title={Determination of groups of prime-power order},
   conference={
      title={Group theory (Proc. Miniconf., Australian Nat. Univ., Canberra,
      1975)},
   },
   book={
      publisher={Springer},
      place={Berlin},
   },
   date={1977},
   pages={73--84. Lecture Notes in Math., Vol. 573},
   review={\MR{0453862 (56 \#12115)}},
}

\bib{OBrien:first}{article}{
   author={O'Brien, E. A.},
   title={The $p$-group generation algorithm},
   note={Computational group theory, Part 1},
   journal={J. Symbolic Comput.},
   volume={9},
   date={1990},
   number={5-6},
   pages={677--698},
   issn={0747-7171},
   review={\MR{1075431 (91j:20050)}},
}

\bib{Robinson:aut}{article}{
   author={Robinson, D. J. S.},
   title={Automorphisms of group extensions},
   conference={
      title={Algebra and its applications},
      address={New Delhi},
      date={1981},
   },
   book={
      series={Lecture Notes in Pure and Appl. Math.},
      volume={91},
      publisher={Dekker},
      place={New York},
   },
   date={1984},
   pages={163--167},
   review={\MR{750857}},
}

\bib{Shalev:p-groups}{article}{
   author={Shalev, A.},
   title={Finite $p$-groups},
   conference={
      title={Finite and locally finite groups},
      address={Istanbul},
      date={1994},
   },
   book={
      series={NATO Adv. Sci. Inst. Ser. C Math. Phys. Sci.},
      volume={471},
      publisher={Kluwer Acad. Publ.},
      place={Dordrecht},
   },
   date={1995},
   pages={401--450},
   review={\MR{1362818 (97h:20023)}},
}
	
\bib{Scharlau}{article}{
   author={Scharlau, R.},
   title={Paare alternierender Formen},
   journal={Math. Z.},
   volume={147},
   date={1976},
   number={1},
   pages={13--19},
   issn={0025-5874},
   review={\MR{0419484 (54 \#7505)}},
}

\bib{Taunt}{article}{
   author={Taunt, D. R.},
   title={Finite groups having unique proper characteristic subgroups. I},
   journal={Proc. Cambridge Philos. Soc.},
   volume={51},
   date={1955},
   pages={25--36},
   review={\MR{0067886 (16,792f)}},
}

\bib{Vish}{article}{
   author={Vi{\v{s}}nevecki{\u\i}, A. L.},
   title={Groups of class $2$ and exponent $p$ with commutant of order
   $p^{2}$},
   journal={Dokl. Akad. Nauk Ukrain. SSR Ser. A},
   date={1980},
   number={9},
   pages={9--11, 103},
   issn={0201-8446},
   review={\MR{593560 (82d:20026)}},
}

\bib{Wilson:unique-cent}{article}{
   author={Wilson, J. B.},
   title={Decomposing $p$-groups via Jordan algebras},
   journal={J. Algebra},
   volume={322},
   date={2009},
   number={8},
   pages={2642--2679},
   issn={0021-8693},
   review={\MR{2559855 (2010i:20016)}},
}

\bib{Wilson:find-cent}{article}{
   author={Wilson, J. B.},
   title={Finding central decompositions of $p$-groups},
   journal={J. Group Theory},
   volume={12},
   date={2009},
   number={6},
   pages={813--830},
   issn={1433-5883},
   review={\MR{2582050 (2011a:20044)}},
}
\bib{Wilson:division}{article}{
	author={Wilson, J. B.},
	title={Division, adjoints, and dualities of bilinear maps (in press)}, 
	journal={Communications in Algebra},
	volume={41},
	date={2013}, 
	note={arXiv:1007.4329}
}

\bib{Wilson:direct-I}{article}{
	author={Wilson, J. B.},
	title={Existence, algorithms, and asymptotics of direct product decompositions, I},
	journal={Groups - Complexity -Cryptology},
	volume={4},
	date={2012},
	pages={33--72},
}

\end{biblist}
\end{bibdiv}

\end{document}